\newtheorem{thm}{Theorem}[section]
\newtheorem{rem}[thm]{Remark}
\newtheorem{lemma}[thm]{Lemma}
\numberwithin{equation}{section}
\begin{document}
\bigskip

\centerline{\Large\bf  A note on global regularity results for 2D Boussinesq }
\smallskip

\centerline{\Large\bf  equations with fractional dissipation}

\bigskip

\centerline{Zhuan Ye}

\medskip

\centerline{ School of Mathematical Sciences, Beijing Normal University,}
\medskip

\centerline{Laboratory of Mathematics and Complex Systems, Ministry of Education,}
\medskip

\centerline{Beijing 100875, People's Republic of China}

\medskip

\centerline{E-mail: \texttt{yezhuan815@126.com }}
\medskip

\centerline{\texttt{Tel.: +86 10 58807735; fax: +86 10 58808208.}}

\bigskip
\bigskip

{\bf Abstract:}~~%
In this paper we study the Cauchy problem for the two-dimensional (2D) incompressible Boussinesq equations with fractional Laplacian dissipation and thermal
diffusion. Invoking the energy method and several commutator estimates, we get the global regularity result of the 2D
Boussinesq equations as long as $1-\alpha<\beta<
\min\Big\{\frac{\alpha}{2},\,\,
\frac{3\alpha-2}{2\alpha^{2}-6\alpha+5},
\,\,\frac{2-2\alpha}{4\alpha-3}\Big\}$ with $0.77963\thickapprox\alpha_{0}<\alpha<1$. As a result, this result is a further improvement of the previous two works \cite{MX,YXX}.

{\vskip 1mm
 {\bf AMS Subject Classification 2010:}\quad 35Q35; 35B65; 76D03.

 {\bf Keywords:}
2D Boussinesq equations; Fractional dissipation; Global regularity.}

\vskip .4in
\section{Introduction}\label{intro}
In this paper we study the Cauchy problem for the 2D incompressible Boussinesq equations with fractional Laplacian dissipation in $\mathbb{R}^{2}$
\begin{equation}\label{Bouss}
\left\{\aligned
&\partial_{t}u+(u \cdot \nabla) u+\nu\Lambda^{\alpha}u+\nabla p=\theta e_{2},\\
&\partial_{t}\theta+(u \cdot \nabla) \theta+\kappa\Lambda^{\beta}\theta=0,\\
&\nabla\cdot u=0,\\
&u(x, 0)=u_{0}(x),  \quad \theta(x,0)=\theta_{0}(x),
\endaligned\right.
\end{equation}
where $u(x,\,t)=(u_{1}(x,\,t),\,u_{2}(x,\,t))$ is a vector field denoting
the velocity, $\theta=\theta(x,\,t)$ is a scalar function denoting
the temperature in the content of thermal convection and the density
in the modeling of geophysical fluids, $p$ the scalar pressure and
$e_{2}=(0,\,1)$. Here the numbers $\nu\geq0$, $\kappa\geq0$, $\alpha\geq0$ and $\beta\geq0$ are real parameters. The fractional Laplacian operator $\Lambda^{\alpha}$,
$\Lambda:=(-\Delta)^{\frac{1}{2}}$ denotes the Zygmund operator which is defined through the Fourier transform, namely $$\widehat{\Lambda^{\alpha}
f}(\xi)=|\xi|^{\alpha}\hat{f}(\xi),$$
where
$$\hat{f}(\xi)=\frac{1}{(2\pi)^{2}}\int_{\mathbb{{R}}^{2}}{e^{-ix\cdot\xi}f(x)\,dx}.$$
The fractional Laplacian serves to model many physical phenomena such as overdriven detonations in gases \cite{CLA}. It is also used in some mathematical models in hydrodynamics, molecular biology and finance mathematics,
see for instance \cite{DI}.

Actually, the standard 2D Boussinesq equations (that is $\alpha=\beta=2$)
model geophysical flows such as atmospheric fronts and oceanic circulation, and play an important role in the study of
Raleigh-Bernard convection (see for example \cite{MB,PG} and references therein).
Moreover, there are some geophysical circumstances related to the Boussinesq equations with fractional Laplacian (see \cite{Cap,PG} for details). The Boussinesq equations with fractional Laplacian also closely related equations such as
the surface quasi-geostrophic equation model important geophysical phenomena (see, e.g., \cite{Constantin}). The standard 2D Boussinesq equations and their fractional Laplacian generalizations have attracted considerable attention recently due to
their physical applications and mathematical significance.
Obviously, for case $\mu=\kappa=0$, the system (\ref{Bouss}) reduces to the inviscid  Boussinesq equations, whose global well-posedness of smooth solutions is an outstanding open
problem in fluid dynamics (except if $\theta_{0}$ is a constant, of course) which may be formally compared to the similar problem for the three-dimensional axisymmetric Euler equations with swirl (see \cite{MB}).  In contrast, in the case when $\alpha=\beta=2$, the global well posedness has been shown previously, we refer, for example, to \cite{Can}. Therefore, there are a large number of works devoted to studying the intermediate cases, such as fractional dissipation, partial anisotropic dissipation and so on.
The global regularity to the system
(\ref{Bouss}) for the cases when $\alpha=2$ and $\kappa=0$ or $\beta=2$ and $\mu=0$ were established by Chae \cite{C1} and
by Hou and Li \cite{HL} independently.
By deeply developing the structures of the coupling system, Hmidi, Keraani and Rousset \cite{HK3,HK4} were able to established the global well-posedness result to the system (\ref{Bouss}) with two special critical case, namely
$\alpha=1$ and $\kappa=0$ or $\beta=1$ and $\mu=0$.
The more general
critical case $\alpha+\beta=1$ with $0<\alpha,\,\beta<1$ is extremely difficult. Very recently,  the global regularity of the general
critical case $\alpha+\beta=1$ with $\alpha>\frac{23-\sqrt{145}}{12}\thickapprox 0.9132$ and $0<\beta<1$ was recently examined by Jiu,
Miao, Wu and Zhang \cite{JMWZ}. This result was further improved by Stefanov and Wu \cite{SW}, which requires $\alpha+\beta=1$ with $\alpha>\frac{\sqrt{1777}-23}{24}\thickapprox 0.798$ and $0<\beta<1$. Here we want to state that even in the subcritical ranges, namely $\alpha+\beta>1$ with $0<\alpha,\,\beta<1$, the global regularity of (\ref{Bouss}) is also definitely nontrivial and quite difficult. Actually, to the best of our knowledge
there are only several works concerning the subcritical cases, please refer to \cite{CV,MX,YJW,YX201501,YX201502,YXX}. More precisely, Miao and Xue \cite{MX} obtained
the global regularity for system (\ref{Bouss}) for the case $\nu>0$, $\kappa>0$ and
$$\frac{6-\sqrt{6}}{4}<\alpha<1,\,\,\,1-\alpha<\beta<\min\Big\{\frac{7+2\sqrt{6}}{5}\alpha-2,\,\,
\frac{\alpha(1-\alpha)}{\sqrt{6}-2\alpha},\,\,\,2-2\alpha \Big\}.$$
In addition, Constantin and Vicol \cite{CV} verified the global regularity of the system (\ref{Bouss}) on the case when the thermal diffusion dominates, namely
$$\nu>0,\,\,\,\kappa>0,\,\,\,0<\alpha<2,\,\,\,0<\beta<2,\,\,\,\beta>\frac{2}{2+\alpha}.$$
Recently, Yang, Jiu and Wu \cite{YJW} proved the global regularity of the system (\ref{Bouss}) with
$$\nu>0,\,\,\,\kappa>0,\,\,\,0<\alpha <1,\,\,\,0< \beta<1,\,\,\,\beta>1-\frac{\alpha}{2},\,\,\,
\beta\geq\frac{2+\alpha}{3},\,\,\,\beta>\frac{10-5\alpha}{10-4\alpha}.$$
Here we want to point out that the above two works \cite{CV,YJW} have been improved by the recent two manuscripts \cite{YX201501,YX201502}. In particular, we \cite{YX201502} proved the global well-posedness result for the system (\ref{Bouss}) with
$$\nu>0,\,\,\,\kappa>0,\,\,\,0<\alpha <1,\,\,\,0< \beta<1,\,\,\,\beta>1-\frac{\alpha}{2}.$$
It is also worthwhile to mention that there are numerous studies about the Boussinesq equations with partial anisotropic dissipation, see for example \cite{ACW10,ACW11,ACSWXY,DP3,CW,LLT}.
Many other interesting recent results on the
Boussinesq equations can be found, with no intention to be complete (see,
e.g., \cite{ACWX,CW2,CDJ,D,DP3,HassHm,Hmidi2011,JPL,JMWZ,JWYang,KRTW,LT,LLT,LWZ,WuXu,WXY,XX,YZ} and the references therein).

To complement and improve the existing results described above, this paper continues the previous two works \cite{MX,YXX} to show the global regularity result. Since the concrete values of the constant $\nu,\,\kappa$ play no role in
our discussion, we shall assume $\nu=\kappa=1$ throughout
this paper. Now our main result is the following theorem.
\begin{thm}\label{Th1} Suppose that $0.77963\thickapprox\alpha_{0}<\alpha<1$ and $0<\beta<1$
obeys
\begin{eqnarray}\label{NR}1-\alpha<\beta<
\min\Big\{\frac{\alpha}{2},\,\,
\frac{3\alpha-2}{2\alpha^{2}-6\alpha+5},
\,\,\frac{2-2\alpha}{4\alpha-3}\Big\}.
\end{eqnarray}
Let $(u_{0}, \theta_{0})
\in H^{\sigma}(\mathbb{R}^{2})\times H^{\sigma}(\mathbb{R}^{2})$ for $\sigma>2$,
then the system (\ref{Bouss}) admits a unique global solution such that for any
$T>0$
$$u\in C([0, T]; H^{\sigma}(\mathbb{R}^{2}))\cap L^{2}([0, T]; H^{\sigma+\frac{\alpha}{2}}(\mathbb{R}^{2})),$$
$$\theta\in C([0, T]; H^{\sigma}(\mathbb{R}^{2}))\cap L^{2}([0, T];
H^{\sigma+\frac{\beta}{2}}(\mathbb{R}^{2})).$$
\end{thm}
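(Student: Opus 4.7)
The plan is to combine standard local well-posedness in $H^\sigma$ (for $\sigma>2$) with a global a priori bound, using the continuation criterion that it suffices to control $\int_0^T\|\nabla u(t)\|_{L^\infty}\,dt$ on every finite interval $[0,T]$. The basic ingredients are immediate: the maximum principle applied to the transport--diffusion equation for $\theta$ gives $\|\theta(t)\|_{L^p}\le\|\theta_0\|_{L^p}$ for every $p\in[1,\infty]$, while pairing the momentum equation with $u$ and absorbing the forcing $\theta e_2$ by Gr\"onwall yields
\begin{equation*}
\|u(t)\|_{L^2}^2+2\int_0^t\|\Lambda^{\alpha/2}u(s)\|_{L^2}^2\,ds \le C(T,u_0,\theta_0).
\end{equation*}
Taking $\mathrm{curl}$ of the momentum equation produces the vorticity equation $\partial_t\omega+u\cdot\nabla\omega+\Lambda^\alpha\omega=\partial_1\theta$, whose right-hand side is the central obstruction: the diffusion $\Lambda^\beta$ with $\beta<1$ cannot absorb one full derivative of $\theta$ directly.

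To decouple this forcing I would adopt the Hmidi--Keraani--Rousset modified-vorticity technique: set $\mathcal R:=\partial_1\Lambda^{-\alpha}$ and $G:=\omega-\mathcal R\theta$. Using the identity $\Lambda^\alpha\mathcal R\theta=\partial_1\theta$, a direct computation gives
\begin{equation*}
\partial_t G+u\cdot\nabla G+\Lambda^\alpha G=[\mathcal R,\,u\cdot\nabla]\theta+\mathcal R\Lambda^\beta\theta,
\end{equation*}
so the goal becomes to propagate a norm on $G$ that, combined with the decomposition $\omega=G+\mathcal R\theta$ and Riesz-type bounds on $\theta$, delivers $\omega\in L^r_tL^q_x$ for some pair with $2/r+2/q<1$. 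The tame term $\mathcal R\Lambda^\beta\theta$ has order $\beta+1-\alpha$, which is positive precisely by the subcritical hypothesis $\beta>1-\alpha$, so it is controlled by the maximum principle and the $\Lambda^\beta$-smoothing of $\theta$. The delicate piece is the commutator $[\mathcal R,u\cdot\nabla]\theta$, to be estimated via a Kato--Ponce/Coifman--Meyer bound of schematic form $\|[\mathcal R,u\cdot\nabla]\theta\|_{L^q}\lesssim\|\nabla u\|_{L^{p_1}}\,\|\theta\|_{B^{1-\alpha}_{p_2,\infty}}+\text{l.o.t.}$, with interpolated indices.

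The principal obstacle, and the source of the numerology (\ref{NR}), is to choose these interpolation indices so that every factor in the resulting Gr\"onwall estimate is absorbable by the a priori quantities already in hand: the parabolic gain $\|\Lambda^{\alpha/2}u\|_{L^2_tL^2_x}$, the gain $\|\Lambda^{\beta/2}\theta\|_{L^2_tL^2_x}$, and the maximum-principle bounds on $\theta$. Each of the three upper bounds $\beta<\alpha/2$, $\beta<(3\alpha-2)/(2\alpha^2-6\alpha+5)$, and $\beta<(2-2\alpha)/(4\alpha-3)$ should arise as a distinct balance constraint in this chain, and the threshold $\alpha_0\approx 0.77963$ is the root of the cubic $2\alpha^3-8\alpha^2+14\alpha-7=0$ at which the three windows, together with the subcriticality $\beta>1-\alpha$, cease to intersect. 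Once $\omega\in L^1_tL^\infty$ is secured, the passage to $H^\sigma$ is standard: apply $\Lambda^\sigma$ to both equations, control $\|[\Lambda^\sigma,u\cdot\nabla](u,\theta)\|_{L^2}$ by the Kato--Ponce commutator estimate, and close by Gr\"onwall using the logarithmic bound $\|\nabla u\|_{L^\infty}\lesssim 1+\|\omega\|_{L^\infty}\log(e+\|u\|_{H^\sigma})$. Uniqueness in the stated class follows from a routine difference energy estimate exploiting the fractional diffusion.
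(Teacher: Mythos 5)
Your setup --- local well-posedness plus the continuation criterion via $\int_0^T\|\nabla u\|_{L^\infty}\,dt$, the basic energy bounds, and the Hmidi--Keraani--Rousset quantity $G=\omega-\partial_1\Lambda^{-\alpha}\theta$ with its transport--diffusion equation --- coincides with the paper's and is correct as far as it goes. But the argument stops exactly where the theorem begins: every constraint in (\ref{NR}), and the threshold $\alpha_0$, must emerge from a closed a priori estimate, and you defer that entire computation to an unspecified ``choice of interpolation indices.'' Concretely, the paper's proof runs through two estimates you do not supply: (i) Lemma \ref{AZL302}, which propagates $\|\Lambda^{\delta}\theta\|_{L^2}$ for $\delta<\beta/2$ simultaneously with $\|G\|_{L^2}$ (this is what later yields the space--time bound (\ref{t314}) on $\|\Lambda^{\gamma\beta}\theta\|_{L^{1/\gamma}}$), and (ii) Lemma \ref{L302}, the bound on $\sup_t\|G\|_{L^m}$ with $m=\frac{2}{2\alpha-1}+\epsilon$, which is the step that actually produces the three upper bounds on $\beta$ and the cubic $2\alpha^3-8\alpha^2+14\alpha-7=0$. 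Note also that the relevant target is $\sup_t\|G\|_{L^m}$ for $m>\frac{2}{2\alpha-1}$, from which $\omega\in L^1_tB^0_{\infty,1}$ and hence $\nabla u\in L^1_tL^\infty$ follow; a Serrin-type condition $2/r+2/q<1$ on $\omega$ is not the criterion used here.

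More importantly, the one concrete tool you name for the hard term is not adequate. A Kato--Ponce/Coifman--Meyer bound of the schematic form $\|[\mathcal R,u\cdot\nabla]\theta\|_{L^q}\lesssim\|\nabla u\|_{L^{p_1}}\|\theta\|_{B^{1-\alpha}_{p_2,\infty}}$ spends a full derivative on $u$; at the stage of the $L^m$ estimate for $G$ you control only $\|G\|_{L^m}$ and the $\alpha/2$-smoothing, so $\|\nabla u\|_{L^{p_1}}\sim\|\omega\|_{L^{p_1}}$ cannot be absorbed and the Gr\"onwall loop does not close in the stated range (this cost of a full derivative is essentially why the older argument of Miao--Xue required $\alpha>\frac{6-\sqrt6}{4}$). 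The paper instead splits $u=u_G+u_\theta$ and invokes the Stefanov--Wu trilinear commutator estimates of Lemma \ref{Lem23}, which cost only $\|G\|_{L^{p_3}}$ with no derivative and distribute fractional derivatives onto $\theta$ and onto the test function $|G|^{m-2}G$; the latter is handled by the bilinear estimate of Lemma \ref{Lem25}. Without these ingredients the announced range of $(\alpha,\beta)$ cannot be reached, so the proposal has a genuine gap at its core.
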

\begin{rem}\rm
Here we say some words about $\alpha_{0}$ which can be explicitly formulated as
$$\alpha_{0}=\frac{8-(\sqrt[3]{6\sqrt{609}+118}-\sqrt[3]{6\sqrt{609}-118})}{6}\thickapprox
0.77963.$$ By using the well-known Shengjin's Formulas \cite{Fan}, it is easy
to show that $\alpha_{0}$ is a unique real solution to the following
cubic equation
$$2\alpha^{3}-8\alpha^{2}+14\alpha-7=0.$$
\end{rem}
\begin{rem}\rm
The condition $\alpha>\alpha_{0}\thickapprox0.77963$ is weaker than the previous two works \cite{MX,YXX}, where the corresponding conditions are $\alpha>\frac{6-\sqrt{6}}{4}\thickapprox0.887627$ and $\alpha>\frac{21-\sqrt{217}}{8}\thickapprox0.783635$, respectively. Hence, this result
can be regarded as a further improvement of the results in \cite{MX,YXX}.
\end{rem}
\begin{rem}\rm
For technical reasons, the $\beta$ should be smaller than a complicated explicit function. As a matter of fact, it is strongly
believed that the diffusion term is always good term and the larger the
power $\beta$ is, the better effects it produces. Therefore,
we conjecture that the above theorem should hold
for all the cases $\alpha_{0}<\alpha<1$ and $1-\alpha<\beta<1$.
\end{rem}

\vskip .4in
\section{The proof of Theorem \ref{Th1}}\setcounter{equation}{0}
This section is devoted to the proof of Theorem \ref{Th1}. Now let
us to prove our main theorem. First, the local well posedness of the
system (\ref{Bouss}) for smooth initial data is well-known to us
(see for example \cite{MB}), and therefore, it suffices to prove the
global in time {\it a priori} estimate on $[0,\,T]$ for any given
$T>0$. In this paper, all constants will be denoted by $C$ that is a
generic constant depending only on the quantities specified in the
context.

Thanks to the basic energy estimates, we obtain
immediately
\begin{eqnarray}\label{t301}
\sup_{0\leq t\leq T}\|\theta(t)\|_{L^{2}}^{2}+\int_{0}^{T}{
\|\Lambda^{\frac{\beta}{2}}\theta(\tau)\|_{L^{2}}^{2}\,d\tau}\leq
\|\theta_{0}\|_{L^{2}},\quad\|\theta(t)\|_{L^{p}}\leq \|\theta_{0}\|_{L^{p}},\quad \forall p\in
[2, \infty],
\end{eqnarray}
\begin{eqnarray}\label{t302}
\sup_{0\leq t\leq T}\|u(t)\|_{L^{2}}^{2}+\int_{0}^{T}{
\|\Lambda^{\frac{\alpha}{2}}u(\tau)\|_{L^{2}}^{2}\,d\tau}\leq
C(T,\,u_{0},\,\theta_{0}).
\end{eqnarray}
Now we apply operator $\mbox{curl}$ to the equation $(\ref{Bouss})_{1}$
to obtain the following vorticity $w=\partial_{1}u_{2}-\partial_{2}u_{1}$ equation
\begin{eqnarray}\label{t303}\partial_{t}w+(u\cdot\nabla)w+\Lambda^{\alpha} w=\partial_{x}\theta.\end{eqnarray}
However, the "vortex stretching" term $\partial_{x}\theta$
appears to prevent us from proving any global bound for $w$. To overcome this difficulty, a natural idea is to eliminate the term $\partial_{x}\theta$ from the vorticity equation. This method was first introduced by Hmidi, Keraani and Rousset \cite{HK3,HK4} to treat the Boussinesq equations with critical cases. Now we set $\mathcal {R}_{\alpha}$ as the singular integral operator
$$\mathcal {R}_{\alpha}:=\partial_{x}\Lambda^{-\alpha}.$$
Then we can show that the new quantity $G=\omega-\mathcal {R}_{\alpha}\theta$ satisfies
\begin{eqnarray}\label{t305}\partial_{t}G+(u\cdot\nabla)G+\Lambda^{\alpha}G=[\mathcal {R}_{\alpha},\,u\cdot\nabla]\theta+\Lambda^{\beta-\alpha}\partial_{x}\theta,\end{eqnarray}
here and in sequel,
the following standard commutator notation are used frequently
$$[\mathcal {R}_{\alpha},\,u\cdot\nabla]\theta:=
\mathcal {R}_{\alpha}(u\cdot\nabla\theta)-u\cdot\nabla\mathcal
{R}_{\alpha}\theta.$$
The above equation is very important in our analysis in order to
derive some crucial {\it a priori} estimates. Moreover, the velocity
field $u$ can be decomposed into the following two parts
$$u=\nabla^{\perp}\Delta^{-1}\omega=\nabla^{\perp}\Delta^{-1}G
+\nabla^{\perp}\Delta^{-1}\mathcal {R}_{\alpha}:=u_{G}+u_{\theta}.$$

Before further proving our main result, we need to recall some
useful lemmas. The first lemma concerns the following commutator
estimate, which plays a key role in proving our main result.
\begin{lemma}[see \cite{YX201502}]\label{NCE}
Let $p\in[2, \infty)$ and $r\in[1, \infty]$ and $\delta\in(0,1)$,
$s\in(0, 1)$ such that $s+\delta<1$, then it holds
\begin{align}\label{tNCE}
\|[\Lambda^{\delta},f]g\|_{B_{p,r}^{s}}\leq
C(p,r,\delta,s)\big(\|\nabla f\|_{L^{p}}\|g\|_{
{B}_{\infty,r}^{s+\delta-1}}+\|f\|_{L^{2}}\|g\|_{L^{2}}\big).
\end{align}
Here and in what follows, $B_{p,r}^{s}$ denotes the standard Besov
space.
\end{lemma}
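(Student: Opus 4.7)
The plan is to use Bony's paraproduct decomposition together with the Littlewood--Paley characterization of $B_{p,r}^{s}$, which is the standard machinery for this class of commutator estimates. Writing $[\Lambda^{\delta},f]g=\Lambda^{\delta}(fg)-f\Lambda^{\delta}g$ and expanding both products via $uv=T_{u}v+T_{v}u+R(u,v)$, one obtains the three-piece splitting
$$[\Lambda^{\delta},f]g=\underbrace{[\Lambda^{\delta},T_{f}]g}_{I_{1}}+\underbrace{\Lambda^{\delta}T_{g}f-T_{\Lambda^{\delta}g}f}_{I_{2}}+\underbrace{\Lambda^{\delta}R(f,g)-R(f,\Lambda^{\delta}g)}_{I_{3}}.$$
For each piece I would apply $\Delta_{j}$, bound $\|\Delta_{j}(\cdot)\|_{L^{p}}$ dyadically, weight by $2^{js}$, and take the $\ell^{r}_{j}$ norm. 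Throughout, the zeroth Littlewood--Paley blocks of $f$ and $g$ are set aside and handled separately, because the Besov seminorm does not control low frequencies in a useful scale-sensitive way.

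For $I_{1}$, spectral support forces $k\sim j$, and the classical Coifman--Meyer kernel expansion of $\Lambda^{\delta}$ against a function with annular spectrum of size $2^{j}$ yields $\|\Delta_{j}I_{1}\|_{L^{p}}\lesssim 2^{j(\delta-1)}\|\nabla f\|_{L^{p}}\|\Delta_{j}g\|_{L^{\infty}}$; the weight $2^{js}$ then assembles $\|g\|_{B_{\infty,r}^{s+\delta-1}}$ exactly. For $I_{2}$, the delicate subpiece is $\Lambda^{\delta}T_{g}f=\sum_{k}\Lambda^{\delta}(S_{k-1}g\cdot\Delta_{k}f)$: the annular spectrum of each summand provides a gain $2^{k\delta}$, while $\|S_{k-1}g\|_{L^{\infty}}\lesssim\sum_{l<k}c_{l}\,2^{l(1-s-\delta)}$ with $c\in\ell^{r}$. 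Here the hypothesis $s+\delta<1$ enters precisely to make this geometric sum decreasing in $l$, so it equals $2^{k(1-s-\delta)}$ times an $\ell^{1}$-convolution of $c$. Combining with Bernstein's $\|\Delta_{k}f\|_{L^{p}}\lesssim 2^{-k}\|\nabla f\|_{L^{p}}$ and applying Young yields the desired $\ell^{r}_{j}$-summable bound; the symmetric subpiece $T_{\Lambda^{\delta}g}f$ is analogous and only requires the weaker $s<1$.

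For $I_{3}$, the spectrum of $\Delta_{k}u\cdot\widetilde{\Delta}_{k}v$ sits in a ball of radius $\lesssim 2^{k}$, so $\Delta_{j}R$ only sees $k\geq j-N$. After substituting $\|\Delta_{l}g\|_{L^{\infty}}\lesssim c_{l}\,2^{-l(s+\delta-1)}$ and Bernstein, one obtains convolutions against the kernels $2^{-m(s+\delta)}\mathbf{1}_{m\geq -N}$ (for the $\Lambda^{\delta}R(f,g)$ piece) and $2^{-ms}\mathbf{1}_{m\geq -N}$ (for the $R(f,\Lambda^{\delta}g)$ piece); both kernels lie in $\ell^{1}$ because $s>0$ and $s+\delta>0$, so Young's inequality closes this estimate.

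The main obstacle, and the source of the additive $\|f\|_{L^{2}}\|g\|_{L^{2}}$ term, is the low-frequency regime: Bernstein's $\|\Delta_{k}f\|_{L^{p}}\lesssim 2^{-k}\|\nabla f\|_{L^{p}}$ degenerates as $k\to -\infty$, and the Besov norm of $g$ does not resolve $S_{0}g$ sharply. My plan is to split $f=S_{0}f+(\mathrm{Id}-S_{0})f$ and likewise for $g$ at the outset; the pure high-frequency--high-frequency interactions are handled by the three-piece decomposition above, while every cross term containing at least one low-frequency factor is bounded via H\"older together with Bernstein's bounded-spectrum embedding $L^{2}\hookrightarrow L^{p}$ (available since $p\geq 2$), producing a contribution uniformly in $j$ controlled by $\|f\|_{L^{2}}\|g\|_{L^{2}}$. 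Summing all contributions in $\ell^{r}_{j}$ yields (\ref{tNCE}).
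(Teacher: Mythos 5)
The paper does not prove Lemma \ref{NCE}; it is imported verbatim from the reference \cite{YX201502}, so there is no internal proof to compare against. Your Bony-paraproduct argument is the standard (and, as far as one can tell, the cited source's) route, and it is essentially correct: the splitting into $[\Lambda^{\delta},T_{f}]g$, the two paraproducts with $g$ in the low-frequency slot, and the remainder is the right decomposition; the hypotheses enter exactly where you say ($s+\delta<1$ for the $\ell^{1}$-summability of the kernel $2^{(l-k)(1-s-\delta)}\mathbf{1}_{l\leq k-2}$ in the $\Lambda^{\delta}T_{g}f$ piece, $s>0$ and $s+\delta>0$ for the remainder, and $p\geq 2$ for the Bernstein embedding $L^{2}\hookrightarrow L^{p}$ on the low-frequency blocks that generate the additive $\|f\|_{L^{2}}\|g\|_{L^{2}}$ term). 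Two cosmetic points: the relative weight $2^{(l-k)(1-s-\delta)}$ is what decays as $l$ moves away from $k$ (the absolute weight $2^{l(1-s-\delta)}$ is increasing in $l$), and in the piece $T_{\Lambda^{\delta}g}f$ you should note that $\Delta_{-1}\Lambda^{\delta}$ is still a bounded convolution operator (its symbol $|\xi|^{\delta}\chi(\xi)$ has integrable inverse Fourier transform) so the lowest block of $\Lambda^{\delta}g$ causes no harm; neither issue affects the validity of the argument.
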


To prove the theorem, we need the following commutator
estimate involving $\mathcal
{R}_{\alpha}$, which was established by Stefanov and Wu \cite{SW}.
\begin{lemma}\label{Lem23}
Assume that $\frac{1}{2}<\alpha<1$ and $1<p_{2}<\infty$,
$1<p_{1},\,p_{3}\leq\infty$ with
$\frac{1}{p_{1}}+\frac{1}{p_{2}}+\frac{1}{p_{3}}=1$. Then for $0\leq
s_{1}<1-\alpha$ and $s_{1}+s_{2}>1-\alpha$, the following holds true
\begin{align}\label{t201}
\Big|\int_{\mathbb{R}^{2}}{ F[\mathcal
{R}_{\alpha},\,u_{G}\cdot\nabla]\theta\,dx}\Big|\leq
C\|\Lambda^{s_{1}}\theta\|_{L^{p_{1}}}\|F\|_{W^{s_{2},\,p_{2}}}
\|G\|_{L^{p_{3}}}.\end{align} Similarly, for $0\leq s_{1}<1-\alpha$
and $s_{1}+s_{2}>2-2\alpha$, the following holds true
\begin{align}\label{t202}
\Big|\int_{\mathbb{R}^{2}}{ F[\mathcal
{R}_{\alpha},\,u_{\theta}\cdot\nabla]H\,dx}\Big|\leq
C\|\Lambda^{s_{1}}\theta\|_{L^{p_{1}}}\|F\|_{W^{s_{2},\,p_{2}}}
\|H\|_{L^{p_{3}}}.\end{align}
Here and in what follows, $W^{s,\,p}$ denotes the standard Sobolev
space.
\end{lemma}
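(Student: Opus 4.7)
The plan is to establish both commutator bounds via Littlewood--Paley/Bony paraproduct decompositions, exploiting that $\mathcal{R}_\alpha = \partial_x\Lambda^{-\alpha}$ is a Fourier multiplier of order $1-\alpha$. The budget $s_1+s_2>1-\alpha$ in (\ref{t201}) matches the expected order of the commutator of $\mathcal{R}_\alpha$ with a divergence-free, zeroth-order transport coefficient (in the sense that $\nabla u_G \sim G$), while the stronger budget $s_1+s_2>2-2\alpha$ in (\ref{t202}) records an additional $\mathcal{R}_\alpha$-smoothing hidden inside $u_\theta=\nabla^\perp\Delta^{-1}\mathcal{R}_\alpha\theta$.

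The first step is to apply Bony's paraproduct decomposition
\begin{align*}
u_G^k\partial_k\theta = T_{u_G^k}\partial_k\theta + T_{\partial_k\theta}u_G^k + R(u_G^k,\partial_k\theta)
\end{align*}
(summation in $k$), and to use $\nabla\cdot u_G=0$ to recast the last two pieces in divergence form. The heart of the argument is the low-high commutator $[\mathcal{R}_\alpha, T_{u_G^k}]\partial_k\theta$: on each dyadic block $\Delta_j$, the kernel $K_j$ of $\mathcal{R}_\alpha\Delta_j$ has size $\sim 2^{(1-\alpha)j}$ in $L^1$ and first moment $\sim 2^{-\alpha j}$, so a standard mean-value argument on the slowly varying factor $S_{j-N}u_G$, combined with $u_G=\nabla^\perp\Delta^{-1}G$, produces a block bound proportional to $2^{(1-\alpha)j}\|G\|_{L^{p_3}}\|\Delta_j\theta\|_{L^q}$ for an appropriate intermediate exponent $q$. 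The high-low and resonance pieces carry no commutator cancellation but, after the divergence-form rearrangement, $\mathcal{R}_\alpha$ can be evaluated block-wise by Bernstein and absorbed directly into the $W^{s_2,p_2}$-norm of $F$ by duality.

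Pairing the full decomposition against $F$, transferring $s_1$ derivatives to $\theta$ and $s_2$ derivatives to $F$, and summing in $j$ reduces the estimate to a geometric sum with ratio $2^{(1-\alpha-s_1-s_2)j}$; this converges precisely when $s_1+s_2>1-\alpha$, while $s_1<1-\alpha$ guarantees low-frequency summability, where the $\Lambda^{-\alpha}$ kernel is most singular. For (\ref{t202}) the same scheme applies with $u_\theta$ in place of $u_G$: now $\nabla u_\theta$ is of order $1-\alpha$ in $\theta$ rather than of order $0$ in $G$, so the factor $\|G\|_{L^{p_3}}$ is replaced by a block quantity controlled by $\|\mathcal{R}_\alpha\theta\|$, and by Bernstein plus a further Littlewood--Paley redistribution this redistributes $1-\alpha$ additional derivatives between $\|\Lambda^{s_1}\theta\|_{L^{p_1}}$ and $\|H\|_{L^{p_3}}$. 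The resulting block sum has ratio $2^{(2-2\alpha-s_1-s_2)j}$, giving the sharper threshold $s_1+s_2>2-2\alpha$.

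The main obstacle is the bookkeeping of the resonance (diagonal) paraproduct, where the frequencies of $u$ and $\theta$ (respectively $u_\theta$ and $H$) are comparable and one must lean on the $\mathcal{R}_\alpha$-smoothing together with the divergence-free structure rather than on any genuine commutator cancellation; alongside this, one must choose auxiliary Hölder exponents carefully so that the geometric sums converge precisely under the stated hypotheses on $(s_1,s_2,\alpha)$ and not weaker ones.
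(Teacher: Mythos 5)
The paper does not actually prove this lemma: it is quoted verbatim from Stefanov and Wu \cite{SW}, so the only thing to compare your proposal against is the method of that reference. Your outline is essentially the strategy used there: Bony decomposition of $u\cdot\nabla\theta$, genuine commutator cancellation only in the paraproduct piece $T_{u^{k}}\partial_{k}\theta$ (via a first-moment estimate on the kernel of $\mathcal{R}_{\alpha}\Delta_{j}$), divergence-form rewriting of the remaining pieces using $\nabla\cdot u=0$, and the derivative count $\nabla u_{G}\sim G$ versus $\nabla u_{\theta}\sim\Lambda^{1-\alpha}\theta$ that produces the two thresholds $s_{1}+s_{2}>1-\alpha$ and $s_{1}+s_{2}>2-2\alpha$. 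That accounting is correct as a heuristic. What keeps this from being a proof is that the two hardest pieces are waved through: for the high-low and resonance terms you assert that $\mathcal{R}_{\alpha}$ ``can be evaluated block-wise by Bernstein and absorbed into the $W^{s_{2},p_{2}}$-norm of $F$ by duality'' without verifying that the H\"older exponents close for the stated range $1<p_{1},p_{3}\le\infty$, $1<p_{2}<\infty$ (the $L^{\infty}$ endpoints are precisely where such paraproduct bounds need care), and you never identify where the hypothesis $\frac{1}{2}<\alpha<1$ enters. Moreover, in \eqref{t202} the extra $1-\alpha$ derivatives must be redistributed between $\theta$ and $F$, not ``between $\|\Lambda^{s_{1}}\theta\|_{L^{p_{1}}}$ and $\|H\|_{L^{p_{3}}}$'' --- $H$ carries no derivatives on the right-hand side --- so that sentence, as written, does not deliver the claimed threshold. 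As a blueprint your plan is faithful to \cite{SW}; as a standalone proof these steps still have to be filled in.
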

The following lemma is the bilinear estimate which will be used
frequently.
\begin{lemma}\label{Lem25}
Let $2<m<\infty$, $0<s<1$ and $p, q, r\in(1, \infty)^{3}$ such that
$\frac{1}{p}=\frac{1}{q}+\frac{1}{r}$, then it holds
\begin{eqnarray}\label{t205}\|\Lambda^{s}(|f|^{m-2}f)\|_{L^{p}}\leq C\|f\|_{\dot{B}_{q,\,p}^{s}}\|f\|_{L^{r(m-2)}}^{m-2},
\end{eqnarray}
\begin{eqnarray}\||f|^{m-2}f\|_{W^{s,p}}\leq C\|f\|_{{B}_{q,\,p}^{s}}\|f\|_{L^{r(m-2)}}^{m-2}.\nonumber
\end{eqnarray}
\end{lemma}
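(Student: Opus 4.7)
The plan is to reduce both inequalities to a pointwise chain-rule bound on the map $F(z)=|z|^{m-2}z$, combined with the standard finite-difference characterization of Besov norms. The homogeneous estimate is the essential content, and the inhomogeneous one follows by supplementing it with an elementary Hölder bound on the $L^p$ piece.

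First I would use that $F\in C^1(\mathbb R)$ with $F'(z)=(m-1)|z|^{m-2}$ (possible since $m>2$), so the mean-value theorem yields the pointwise inequality
\begin{equation*}
|F(a)-F(b)|\le (m-1)\bigl(|a|^{m-2}+|b|^{m-2}\bigr)|a-b|,\qquad a,b\in\mathbb R.
\end{equation*}
Applying this with $a=f(x+h)$ and $b=f(x)$, then taking $L^p$-norm in $x$ and invoking Hölder's inequality with $\tfrac{1}{p}=\tfrac{1}{r}+\tfrac{1}{q}$, one obtains
\begin{equation*}
\|F(f)(\cdot+h)-F(f)(\cdot)\|_{L^p}\le C\,\|f\|_{L^{r(m-2)}}^{\,m-2}\,\|f(\cdot+h)-f(\cdot)\|_{L^q}.
\end{equation*}

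Next I would recall the equivalent characterization of the homogeneous Besov norm valid for $0<s<1$ and $1<q,p<\infty$,
\begin{equation*}
\|g\|_{\dot B^{s}_{q,p}}\sim\left(\int_{\mathbb R^{2}}\frac{\|g(\cdot+h)-g(\cdot)\|_{L^{q}}^{p}}{|h|^{2+sp}}\,dh\right)^{1/p}.
\end{equation*}
Raising the previous inequality to the $p$-th power, dividing by $|h|^{2+sp}$, integrating in $h$, and extracting the $(m-2)$-th power of the $L^{r(m-2)}$-norm out of the integral produces
\begin{equation*}
\|F(f)\|_{\dot B^{s}_{p,p}}\le C\,\|f\|_{L^{r(m-2)}}^{\,m-2}\,\|f\|_{\dot B^{s}_{q,p}}.
\end{equation*}
Under the usual identification of $\dot B^{s}_{p,p}$ with the Slobodeckij space $\dot W^{s,p}$ for $0<s<1$ and $1<p<\infty$, this already gives $\|\Lambda^{s}(|f|^{m-2}f)\|_{L^{p}}$ on the left, which is the first claim. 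For the inhomogeneous version one adds the elementary Hölder estimate $\||f|^{m-2}f\|_{L^p}\le\|f\|_{L^{r(m-2)}}^{m-2}\|f\|_{L^q}$ to upgrade the homogeneous bound to the $W^{s,p}$-norm and the $\dot B^{s}_{q,p}$ semi-norm to the full $B^{s}_{q,p}$-norm on the right.

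The main obstacle, and the part that deserves the most care, is the passage from the Besov semi-norm $\|\cdot\|_{\dot B^{s}_{p,p}}$ to the Riesz-potential semi-norm $\|\Lambda^{s}\cdot\|_{L^{p}}=\|\cdot\|_{\dot F^{s}_{p,2}}$ when $p>2$, since the embedding $\dot B^{s}_{p,p}\hookrightarrow\dot F^{s}_{p,2}$ only runs in the desired direction for $p\le 2$. To cover the full range $p\in(1,\infty)$ I would fall back on Bony's paraproduct decomposition $|f|^{m-2}f=T_{|f|^{m-2}}f+T_{f}(|f|^{m-2})+R(|f|^{m-2},f)$: the first paraproduct is controlled directly in $\dot F^{s}_{p,2}$ by Hölder's inequality and the embedding $L^{r}\hookrightarrow\dot B^{0}_{r,\infty}$ applied to $|f|^{m-2}$, while the second paraproduct and the remainder are handled via a Hölder-type composition estimate for the non-smooth function $z\mapsto|z|^{m-2}$ combined with standard paraproduct bounds, ultimately reducing everything to the quantities already appearing on the right-hand side.
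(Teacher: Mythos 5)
Your argument is essentially the paper's own proof: the same pointwise inequality $\big||a|^{m-2}a-|b|^{m-2}b\big|\leq C(m)|a-b|\big(|a|^{m-2}+|b|^{m-2}\big)$, the same H\"older step with $\frac{1}{p}=\frac{1}{q}+\frac{1}{r}$, and the same finite-difference characterization of the fractional seminorm, with the inhomogeneous bound obtained by adding the elementary $L^{p}$ estimate. The only place you go beyond the paper is your (correct) observation that the Gagliardo--Slobodeckij seminorm is $\|\cdot\|_{\dot{B}_{p,p}^{s}}$ rather than $\|\Lambda^{s}\cdot\|_{L^{p}}=\|\cdot\|_{\dot{F}_{p,2}^{s}}$, so the final identification is automatic only for $p\leq 2$; the paper silently conflates the two, which is harmless in its applications here (where $p=\frac{1}{1-\gamma}<2$ or $p=q/(q-1)<2$), while your paraproduct patch for $p>2$ is plausible in outline but left schematic (composition with the merely H\"older-continuous map $z\mapsto|z|^{m-2}$ for $2<m<3$ would require genuine care).
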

\begin{proof}[\textbf{Proof of Lemma \ref{Lem25}}]
One can find the proof in \cite{YXX} and we sketch it here for convenience.
Let us recall the following characterization of $\dot{W}^{s,\,p}$ with $0<s<1$
$$\|\Lambda^{s}(|f|^{m-2}f)\|_{L^{p}}^{p}\thickapprox \int_{\mathbb{R}^{2}}{\frac{\||f|^{m-2}f(x+.)-|f|^{m-2}f(.)\|_{L^{p}}^{p}}{|x|^{2+sp}}
\,dx}.$$
Note that the following simple inequality
$$\Big||a|^{m-2}a-|b|^{m-2}b\Big|\leq C(m)|a-b|(|a|^{m-2}+|b|^{m-2}),$$
and H${\rm \ddot{o}}$lder inequality, it results in
\begin{eqnarray*}\||f|^{m-2}f(x+.)-|f|^{m-2}f(.)\|_{L^{p}}&\leq& C\|f(x+.)-f(.)\|_{L^{q}}
\||f|^{m-2}\|_{L^{r}}\nonumber\\
&\leq&C\|f(x+.)-f(.)\|_{L^{q}}
\|f\|_{L^{r(m-2)}}^{m-2}.
\end{eqnarray*}
Thus, it follows from the characterization of Besov space that
\begin{eqnarray*}\|\Lambda^{s}(|f|^{m-2}f)\|_{L^{p}}^{p}&\leq& C \int_{\mathbb{R}^{2}}{\frac{\|f(x+.)-f(.)\|_{L^{q}}^{p}
\|f\|_{L^{r(m-2)}}^{(m-2)p}}{|x|^{2+sp}}
\,dx}\nonumber\\
&\leq&C\|f\|_{L^{r(m-2)}}^{(m-2)p} \int_{\mathbb{R}^{2}}{\frac{\|f(x+.)-f(.)\|_{L^{q}}^{p}
}{|x|^{2+sp}}
\,dx}\nonumber\\
&\leq&C\|f\|_{L^{r(m-2)}}^{(m-2)p} \|f\|_{\dot{B}_{q,\,p}^{s}}^{p}.
\end{eqnarray*}
The H$\rm \ddot{o}$lder inequality directly gives
$$\||f|^{m-2}f\|_{L^{p}}\leq C\|f\|_{L^{q}}\||f|^{m-2}\|_{L^{r}}= C\|f\|_{L^{q}}\|f\|_{L^{r(m-2)}}^{m-2}.$$
Consequently, this concludes the proof of the lemma.
\end{proof}

With the above lemmas in hand, we continue to prove the main result.
First we are now in the position to derive the following estimate
concerning the temperature $\theta$ and $G$, which plays an
important role in proving the main theorem and is also the main
difference compared to the recent manuscript \cite{YXX}.
\begin{lemma}\label{AZL302}
Under the assumptions stated in Theorem \ref{Th1}, let $(u, \theta)$
be the corresponding solution of the system (\ref{Bouss}). If
$\beta>1-\alpha$ and $\alpha>\frac{2}{3}$,
then the temperature $\theta$ admits the following bound for any
$\max\Big\{\frac{2-2\alpha-\beta}{2},\,\,\,\frac{2+\beta-3\alpha}{2}\Big\}
<\delta<\frac{\beta}{2}$
\begin{eqnarray}\label{AZ001}
\sup_{0\leq t\leq T}(\|G(t)\|_{L^{2}}^{2}+
\|\Lambda^{\delta}\theta(t)\|_{L^{2}}^{2})
+\int_{0}^{T}{\big(\|\Lambda^{\frac{\alpha}{2}} G\|_{L^{2}}^{2}
+\|\Lambda^{\delta+\frac{\beta}{2}}\theta\|_{L^{2}}^{2}\big)(\tau)\,d\tau}\leq
C(T,\,u_{0},\,\theta_{0}),\nonumber\\
\end{eqnarray}
where $C(T,\,u_{0},\,\theta_{0})$ is a constant depending on $T$ and
the initial data.
\end{lemma}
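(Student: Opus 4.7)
The plan is a coupled energy estimate for $G$ and $\Lambda^\delta\theta$. Testing the equation (\ref{t305}) against $G$ in $L^2$ and, after applying $\Lambda^\delta$ to the temperature equation, testing against $\Lambda^\delta\theta$, the divergence-free condition eliminates the transport terms and yields
\begin{align*}
\frac12\frac{d}{dt}\bigl(\|G\|_{L^2}^2 + \|\Lambda^\delta\theta\|_{L^2}^2\bigr) + \|\Lambda^{\alpha/2}G\|_{L^2}^2 + \|\Lambda^{\delta+\beta/2}\theta\|_{L^2}^2 = I_1 + I_2 + I_3 + I_4,
\end{align*}
where $I_1 = \int G\,[\mathcal R_\alpha, u_G\cdot\nabla]\theta\,dx$, $I_2 = \int G\,[\mathcal R_\alpha, u_\theta\cdot\nabla]\theta\,dx$, $I_3 = \int G\,\Lambda^{\beta-\alpha}\partial_x\theta\,dx$ are the three forcing contributions coming from (\ref{t305}), and $I_4 = -\int \Lambda^\delta\theta\,[\Lambda^\delta, u\cdot\nabla]\theta\,dx$ is the $\theta$-commutator. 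The linear piece $I_3$ is the easiest: Cauchy--Schwarz and Plancherel (pair $\Lambda^{\alpha/2}G$ with $\Lambda^{1+\beta-3\alpha/2}\theta$) give $|I_3|\le\tfrac18\|\Lambda^{\alpha/2}G\|_{L^2}^2 + C\|\Lambda^{1+\beta-3\alpha/2}\theta\|_{L^2}^2$, after which an interpolation between $\|\theta\|_{L^2}$ (bounded by (\ref{t301})) and $\|\Lambda^{\delta+\beta/2}\theta\|_{L^2}$ absorbs the second term as soon as $1+\beta-3\alpha/2\le\delta+\beta/2$; this is precisely the lower bound $\delta>(2+\beta-3\alpha)/2$ in the statement.

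For the Stefanov--Wu commutators $I_1$ and $I_2$, I would apply Lemma \ref{Lem23} with $p_2=2$ and $s_2=\alpha/2$, so that $\|G\|_{W^{s_2,p_2}}$ pairs directly with the $G$-dissipation through Young's inequality. The stringent case is $I_2$, governed by (\ref{t202}): the admissibility requirements $s_1+s_2>2-2\alpha$ and $s_1<1-\alpha$ intersect exactly when $\alpha>2/3$. The remaining indices $(p_1,p_3)$ are then chosen so that $\|\theta\|_{L^{p_3}}$ is handled by the maximum principle (\ref{t301}) and $\|\Lambda^{s_1}\theta\|_{L^{p_1}}$ is interpolated, via Sobolev embedding, between $\|\theta\|_{L^2}$ and $\|\Lambda^{\delta+\beta/2}\theta\|_{L^2}$; this forces $s_1\le\delta+\beta/2$ and produces the second lower bound $\delta>(2-2\alpha-\beta)/2$. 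The companion $I_1$, subject only to the weaker condition $s_1+s_2>1-\alpha$ in (\ref{t201}), closes within the same window.

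For $I_4$, divergence-freeness of $u$ yields $[\Lambda^\delta, u\cdot\nabla]\theta=\nabla\cdot([\Lambda^\delta,u]\theta)$, so integration by parts reduces $I_4$ to $\int\nabla\Lambda^\delta\theta\cdot[\Lambda^\delta,u]\theta\,dx$, to which Lemma \ref{NCE} applies (the hypothesis $s+\delta<1$ is consistent with the assumption $\delta<\beta/2<1$). After splitting $u=u_G+u_\theta$, the $u_G$ piece uses the Calder\'on--Zygmund bound $\|\nabla u_G\|_{L^2}\lesssim\|G\|_{L^2}$, while the $u_\theta$ piece is expressed purely in $\theta$-norms through $u_\theta=\nabla^\perp\Delta^{-1}\mathcal R_\alpha\theta$. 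In both cases the resulting estimate splits between the two dissipations and a prefactor of $\|G\|_{L^2}^2+\|\Lambda^\delta\theta\|_{L^2}^2$ suitable for Gronwall. Summing the four estimates, absorbing the $\epsilon$-small dissipation terms into the left-hand side, and invoking Gronwall's inequality together with the $L^p$ bounds (\ref{t301})--(\ref{t302}) then yields (\ref{AZ001}).

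The main obstacle will be the simultaneous calibration of the five parameters $(s_1,s_2,p_1,p_2,p_3)$ in Lemma \ref{Lem23} against three competing requirements: admissibility of the Stefanov--Wu inequality, Sobolev embedding of $\|\Lambda^{s_1}\theta\|_{L^{p_1}}$ into the $\theta$-dissipation norm, and Young's inequality on the resulting trilinear product. This bookkeeping is exactly what pins down the two lower bounds on $\delta$ and the threshold $\alpha>2/3$ appearing in the statement; the companion estimates for $I_3$ and $I_4$ are technically easier, yet $I_3$ is the sole source of the second lower bound $\delta>(2+\beta-3\alpha)/2$.
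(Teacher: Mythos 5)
Your proposal follows essentially the same route as the paper: a coupled $L^{2}$ energy estimate for $G$ and $\Lambda^{\delta}\theta$, with Lemma \ref{NCE} (at regularity $1-\frac{\beta}{2}$, whose hypothesis $s+\delta<1$ is precisely the upper bound $\delta<\frac{\beta}{2}$) for the $\theta$-commutator, Lemma \ref{Lem23} with $s_{2}=\frac{\alpha}{2}$ paired against the $G$-dissipation for $I_{1},I_{2}$, and derivative splitting plus interpolation for $I_{3}$; the thresholds $\alpha>\frac{2}{3}$ and $\delta>\frac{2+\beta-3\alpha}{2}$ are attributed exactly as in the paper. One bookkeeping slip: with $s_{2}=\frac{\alpha}{2}$ the admissibility of (\ref{t202}) together with $s_{1}\le\delta+\frac{\beta}{2}$ only forces $\delta>\frac{4-5\alpha-\beta}{2}$, which is \emph{weaker} than $\frac{2-2\alpha-\beta}{2}$ once $\alpha>\frac{2}{3}$; the binding constraint $\delta>\frac{2-2\alpha-\beta}{2}$ actually arises in your $I_{4}$, where $\|\nabla u_{\theta}\|_{L^{2}}\lesssim\|\Lambda^{1-\alpha}\theta\|_{L^{2}}$ must be interpolated between $\|\theta\|_{L^{2}}$ and $\|\Lambda^{\delta+\frac{\beta}{2}}\theta\|_{L^{2}}$ (note $1-\alpha>\delta$ in the admissible range, so this interpolation is unavoidable and requires $1-\alpha\le\delta+\frac{\beta}{2}$) — a step your sketch glosses over when asserting that $I_{4}$ yields a Gronwall-ready prefactor.
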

\begin{rem}\rm
Although the above estimate (\ref{AZ001}) holds for
$\max\Big\{\frac{2-2\alpha-\beta}{2},\,\,\,\frac{2+\beta-3\alpha}{2}\Big\}
<\delta<\frac{\beta}{2}$, yet by energy estimate (\ref{t301}) and
the classical interpolation, we find that (\ref{AZ001}) is actually
true for any $0\leq\delta<\frac{\beta}{2}$.
\end{rem}

\begin{proof}[\textbf{Proof of Lemma \ref{AZL302}}]
Applying $\Lambda^{\delta}$ ($\delta>0$ to be fixed later) to $(\ref{Bouss})_{2}$, then multiplying it by $\Lambda^{\delta}\theta$, after integration by parts, we find that
\begin{eqnarray}\label{AZ002}\frac{1}{2}\frac{d}{dt}\|\Lambda^{\delta}\theta(t)\|_{L^{2}}^{2}
+\|\Lambda^{\delta+\frac{\beta}{2}}\theta\|_{L^{2}}^{2}=-\int_{\mathbb{R}^{2}}
\Lambda^{\delta}\big(u \cdot
\nabla\theta\big)\Lambda^{\delta}\theta\,dx.
\end{eqnarray}
Hence, an application of the divergence-free condition, commutator
estimate (\ref{tNCE}), Besov embedding and Gagliardo-Nirenberg
inequality directly yields
\begin{eqnarray}&&\Big|\int_{\mathbb{R}^{2}}
\Lambda^{\delta}\big(u \cdot \nabla\theta\big)\Lambda^{\delta}\theta\,dx\Big|
\nonumber\\&=&\Big|\int_{\mathbb{R}^{2}}
[\Lambda^{\delta}, u \cdot \nabla]\theta\,\,\Lambda^{\delta}\theta\,dx\Big|\nonumber\\
&=&\Big|\int_{\mathbb{R}^{2}}
\nabla\cdot[\Lambda^{\delta}, u]\theta\,\,\Lambda^{\delta}\theta\,dx\Big|\nonumber\\
&\leq& C\|\Lambda^{1-\frac{\beta}{2}}[\Lambda^{\delta}, u]\theta\|_{L^{2}}\|\Lambda^{\delta+\frac{\beta}{2}}\theta\|_{L^{2}}\nonumber\\
&\leq& C\|[\Lambda^{\delta}, u ]\theta\|_{H^{1-\frac{\beta}{2}}}\|\Lambda^{\delta+\frac{\beta}{2}}\theta\|_{L^{2}}\nonumber\\
&\leq& C\|[\Lambda^{\delta}, u ]\theta\|_{B_{2,2}^{1-\frac{\beta}{2}}}\|\Lambda^{\delta+\frac{\beta}{2}}\theta\|_{L^{2}}
\nonumber\\
&\leq& C\big(\|\nabla u\|_{L^{2}}\|\theta\|_{B_{\infty,2}^{\delta-\frac{\beta}{2}}}
+\|u\|_{L^{2}}\|\theta\|_{L^{2}}\big)
\|\Lambda^{\delta+\frac{\beta}{2}}\theta\|_{L^{2}}\qquad \Big(\delta<\frac{\beta}{2}\Big)\nonumber\\
&\leq& C\|\omega\|_{L^{2}}\|\theta\|_{L^{\infty}}
\|\Lambda^{\delta+\frac{\beta}{2}}\theta\|_{L^{2}}+C\|u\|_{L^{2}}\|\theta\|_{L^{2}}
\|\Lambda^{\delta+\frac{\beta}{2}}\theta\|_{L^{2}}\nonumber\\
&\leq& C(\|G\|_{L^{2}}+\|\mathcal {R}_{\alpha}\theta\|_{L^{2}})\|\theta\|_{L^{\infty}}
\|\Lambda^{\delta+\frac{\beta}{2}}\theta\|_{L^{2}}+C\|u\|_{L^{2}}\|\theta\|_{L^{2}}
\|\Lambda^{\delta+\frac{\beta}{2}}\theta\|_{L^{2}}\nonumber\\
&\leq& C\|\theta\|_{L^{\infty}}\|G\|_{L^{2}}\|\Lambda^{\delta+\frac{\beta}{2}}\theta\|_{L^{2}}
+C\|\theta\|_{L^{\infty}}\|\Lambda^{1-\alpha}\theta\|_{L^{2}}
\|\Lambda^{\delta+\frac{\beta}{2}}\theta\|_{L^{2}}\nonumber\\&&+C\|u\|_{L^{2}}\|\theta\|_{L^{2}}
\|\Lambda^{\delta+\frac{\beta}{2}}\theta\|_{L^{2}}\nonumber\\&\leq& C\|\theta\|_{L^{\infty}}\|G\|_{L^{2}}\|\Lambda^{\delta+\frac{\beta}{2}}\theta\|_{L^{2}}
+C\|\theta\|_{L^{\infty}}\|\theta\|_{L^{2}}^{\frac{2\delta+\beta+2\alpha-2}{2\delta+\beta}}
\|\Lambda^{\delta+\frac{\beta}{2}}\theta\|_{L^{2}}^{\frac{2-2\alpha}{2\delta+\beta}}
\|\Lambda^{\delta+\frac{\beta}{2}}\theta\|_{L^{2}}\nonumber\\
&&+C\|u\|_{L^{2}}\|\theta\|_{L^{2}}
\|\Lambda^{\delta+\frac{\beta}{2}}\theta\|_{L^{2}}\nonumber\\
&&\Big(\delta>\frac{2-2\alpha-\beta}{2}\Rightarrow \frac{2-2\alpha}{2\delta+\beta}<1\Big)\nonumber\\
&\leq& \frac{1}{2}\|\Lambda^{\delta+\frac{\beta}{2}}\theta\|_{L^{2}}^{2}+
C(\|\theta\|_{L^{\infty}}^{\frac{2(2\delta+\beta)}{2\delta+\beta+2\alpha-2}}
+\|u\|_{L^{2}}^{2})\|\theta\|_{L^{2}}^{2}+C\|\theta\|_{L^{\infty}}^{2}\|G\|_{L^{2}}^{2}.
\nonumber
\end{eqnarray}
Here we have applied the following facts
$$L^{\infty}\hookrightarrow B_{\infty,2}^{\delta-\frac{\beta}{2}},\quad \|\Lambda^{1-\alpha}\theta\|_{L^{2}(\mathbb{R}^{2})}\leq C\|\theta\|_{L^{2}(\mathbb{R}^{2})}^{\frac{2\delta+\beta+2\alpha-2}{2\delta+\beta}}
\|\Lambda^{\delta+\frac{\beta}{2}}\theta\|_{L^{2}(\mathbb{R}^{2})}^{\frac{2-2\alpha}{2\delta+\beta}},$$
which holds true for $\delta<\frac{\beta}{2}$ and $\delta>\frac{2-2\alpha-\beta}{2}$, respectively.\\
Substituting the above estimate into (\ref{AZ002}), we arrive at
\begin{eqnarray}\label{Vor02}\frac{d}{dt}\|\Lambda^{\delta}\theta(t)\|_{L^{2}}^{2}
+\|\Lambda^{\delta+\frac{\beta}{2}}\theta\|_{L^{2}}^{2}\leq
C(\|\theta\|_{L^{\infty}}^{\frac{2(2\delta+\beta)}{2\delta+\beta+2\alpha-2}}
+\|u\|_{L^{2}}^{2})\|\theta\|_{L^{2}}^{2}+C\|\theta\|_{L^{\infty}}^{2}\|G\|_{L^{2}}^{2},
\end{eqnarray}
Now we test the equation (\ref{t305}) by $G$, integrate the
resulting inequality with respect to $x$ and make use of
divergence-free condition to obtain
\begin{eqnarray}\label{AD001}\frac{1}{2}\frac{d}{dt}\|G(t)\|_{L^{2}}^{2}+
\|\Lambda^{\frac{\alpha}{2}}G\|_{L^{2}}^{2}=\int_{\mathbb{R}^{2}}
{[\mathcal {R}_{\alpha},\,u\cdot\nabla]\theta\,\,G\,dx}+\int_{\mathbb{R}^{2}}
{\Lambda^{\beta-\alpha}\partial_{x}\theta\,\,G\,dx}.
\end{eqnarray}
We easily deduce from Gagliardo-Nirenberg inequality and Young
inequality that
\begin{eqnarray}\label{AD002}\Big|\int_{\mathbb{R}^{2}}
{\Lambda^{\beta-\alpha}\partial_{x}\theta\,\,G\,dx}\Big|&\leq&
C\|\Lambda^{\delta+\frac{\beta}{2}}\theta\|_{L^{2}}
\|\Lambda^{1+\frac{\beta}{2}-\alpha-\delta}G\|_{L^{2}}\nonumber\\
&\leq& C\|\Lambda^{\delta+\frac{\beta}{2}}\theta\|_{L^{2}}
\|G\|_{L^{2}}^{\frac{3\alpha+2\delta-2-\beta}{\alpha}}
\|\Lambda^{\frac{\alpha}{2}}G\|_{L^{2}}^{\frac{2+\beta-2\alpha
-2\delta}{\alpha}}\nonumber\\
&&\Big(\frac{2+\beta-3\alpha}{2}<\delta<\frac{2+\beta-2\alpha}{2}\Big)
\nonumber\\
&\leq&\frac{1}{4}\|\Lambda^{\frac{\alpha}{2}}G\|_{L^{2}}^{2}+\frac{1}{4}
\|\Lambda^{\delta+\frac{\beta}{2}}\theta\|_{L^{2}}^{2}+
C\|G\|_{L^{2}}^{2},
\end{eqnarray}
where in the second line, we have used the following
Gagliardo-Nirenberg inequality
$$\|\Lambda^{1+\frac{\beta}{2}-\alpha-\delta}G\|_{L^{2}}\leq C\|G\|_{L^{2}}^{\frac{3\alpha+2\delta-2-\beta}{\alpha}}
\|\Lambda^{\frac{\alpha}{2}}G\|_{L^{2}}^{\frac{2+\beta-2\alpha
-2\delta}{\alpha}},$$ for any
$\frac{2+\beta-3\alpha}{2}<\delta<\frac{2+\beta-2\alpha}{2}$.\\
Observing the decomposition $u=u_{G}+u_{\theta}$, we get
$$\int_{\mathbb{R}^{2}}
{[\mathcal
{R}_{\alpha},\,u\cdot\nabla]\theta\,\,G\,dx}=\int_{\mathbb{R}^{2}}
{[\mathcal
{R}_{\alpha},\,u_{G}\cdot\nabla]\theta\,\,G\,dx}+\int_{\mathbb{R}^{2}}
{[\mathcal {R}_{\alpha},\,u_{\theta}\cdot\nabla]\theta\,\,G\,dx}.$$
Let us use the estimate (\ref{t201}) with $s_{1}=0$ to control the
above first term as
\begin{eqnarray}\label{AD003}\Big|\int_{\mathbb{R}^{2}}
{[\mathcal
{R}_{\alpha},\,u_{G}\cdot\nabla]\theta\,\,G\,dx}\Big|&\leq&
C\|\theta\|_{L^{\infty}}
\|G\|_{L^{2}}\|G\|_{H^{s_{2}}}\quad (s_{2}>1-\alpha)\nonumber\\
&\leq&C\|\theta\|_{L^{\infty}}
\|G\|_{L^{2}}\|G\|_{H^{\frac{\alpha}{2}}}\quad \big(s_{2}\leq\frac{\alpha}{2}\big)\nonumber\\
&\leq&\frac{1}{8}\|\Lambda^{\frac{\alpha}{2}}G\|_{L^{2}}^{2}+
C(1+\|\theta\|_{L^{\infty}}^{2})\|G\|_{L^{2}}^{2}.
\end{eqnarray}
To estimate the second term, we can apply the estimate (\ref{t202})
with $s_{2}=\frac{\alpha}{2}$ to conclude that
\begin{eqnarray}\label{AD004}\Big|\int_{\mathbb{R}^{2}}
{[\mathcal
{R}_{\alpha},\,u_{\theta}\cdot\nabla]\theta\,\,G\,dx}\Big|&\leq&
C\|\theta\|_{L^{\infty}}
\|\theta\|_{{H}^{s_{1}}}\|G\|_{H^{\frac{\alpha}{2}}}\quad \Big(\frac{4-5\alpha}{2} <s_{1}<1-\alpha\Big)\nonumber\\
&\leq&C\|\theta\|_{L^{\infty}}\|\theta\|_{L^{2}}^{\frac{2\delta+\beta-2s_{1}}{2\delta+\beta}}
\|\Lambda^{\delta+\frac{\beta}{2}}\theta\|_{L^{2}}^{\frac{2s_{1}}{2\delta+\beta}}
\|G\|_{H^{\frac{\alpha}{2}}}\nonumber\\&& \Big(0<s_{1}<\delta+\frac{\beta}{2}\Big)\nonumber\\
&\leq&\frac{1}{8}\|\Lambda^{\frac{\alpha}{2}}G\|_{L^{2}}^{2}+\frac{1}{4}
\|\Lambda^{\delta+\frac{\beta}{2}}\theta\|_{L^{2}}^{2}+
C\|\theta\|_{L^{\infty}}^{\frac{4\delta+2\beta}{2\delta+\beta-2s_{1}}}
\|\theta\|_{L^{2}}^{2},
\end{eqnarray}
where in the first line and second line, the number $s_{1}$ should
be satisfied
$$\max\Big\{0,\,\,\frac{4-5\alpha}{2}\Big\}<s_{1}<\min\Big\{1-\alpha,\,\,\delta+
\frac{\beta}{2}\Big\},$$
which can be ensured by choosing $\delta>\frac{4-5\alpha-\beta}{2}$ and $\alpha>\frac{2}{3}$.\\
Inserting above estimates (\ref{AD002})-(\ref{AD004}) into
(\ref{AD001}), we can conclude
\begin{eqnarray}\label{AD005}\frac{1}{2}\frac{d}{dt}\|G(t)\|_{L^{2}}^{2}
+\frac{1}{2} \|\Lambda^{\frac{\alpha}{2}}G\|_{L^{2}}^{2}\leq
\frac{1}{2} \|\Lambda^{\delta+\frac{\beta}{2}}\theta\|_{L^{2}}^{2}
+C(1+\|\theta\|_{L^{\infty}}^{2})\|G\|_{L^{2}}^{2}
+C\|\theta\|_{L^{\infty}}^{\frac{4\delta+2\beta}{2\delta+\beta-2s_{1}}}
\|\theta\|_{L^{2}}^{2}.\nonumber\\
\end{eqnarray}
By putting (\ref{Vor02}) and (\ref{AD005}) together, we finally get
\begin{eqnarray}\label{AD006}&&\frac{d}{dt}(\|G(t)\|_{L^{2}}^{2}+
\|\Lambda^{\delta}\theta(t)\|_{L^{2}}^{2})+\|\Lambda^{\frac{\alpha}{2}}
G\|_{L^{2}}^{2}
+\|\Lambda^{\delta+\frac{\beta}{2}}\theta\|_{L^{2}}^{2}\nonumber\\&\leq&
C(1+\|\theta\|_{L^{\infty}}^{\frac{2(2\delta+\beta)}{2\delta+\beta+2\alpha-2}}
+\|\theta\|_{L^{\infty}}^{\frac{4\delta+2\beta}{2\delta+\beta-2s_{1}}}+\|u\|_{L^{2}}^{2})\|\theta\|_{L^{2}}^{2}+C(1+\|\theta\|_{L^{\infty}}^{2})\|G\|_{L^{2}}^{2},
\end{eqnarray}
for any $\delta$ satisfying
$$\max\Big\{\frac{2-2\alpha-\beta}{2},\,\,\,\frac{2+\beta-3\alpha}{2},\,\,
\frac{4-5\alpha-\beta}{2}\Big\}
<\delta<\min\Big\{\frac{\beta}{2},\,\,\,\frac{\beta+2-2\alpha}{2}\Big\}
=\frac{\beta}{2}.$$
Observing the facts $\alpha>\frac{2}{3}\Rightarrow \frac{4-5\alpha-\beta}{2}<\frac{2-2\alpha-\beta}{2}$ and $\frac{\beta}{2}<\frac{\beta+2-2\alpha}{2}$, the range of $\delta$ becomes
$$\max\Big\{\frac{2-2\alpha-\beta}{2},\,\,\,\frac{2+\beta-3\alpha}{2}\Big\}
<\delta<\frac{\beta}{2}.$$
By the standard Gronwall inequality, we can
easily get from (\ref{AD006}) that
$$\sup_{0\leq t\leq T}(\|G(t)\|_{L^{2}}^{2}+
\|\Lambda^{\delta}\theta(t)\|_{L^{2}}^{2})
+\int_{0}^{T}{\big(\|\Lambda^{\frac{\alpha}{2}} G\|_{L^{2}}^{2}
+\|\Lambda^{\delta+\frac{\beta}{2}}\theta\|_{L^{2}}^{2}\big)(\tau)\,d\tau}\leq
C(T,\,u_{0},\,\theta_{0}).$$ Thus the conclusion is proved.
\end{proof}

Next we establish the following global {\it a priori} bound of
$L^{m}$  norm for $G$ based on Lemma \ref{AZL302}. This {\it a
priori} bound plays a crucial role in proving the main theorem.
\begin{lemma}\label{L302}
Let $\alpha_{0}<\alpha<1$ and $1-\alpha<\beta<
\min\Big\{\frac{\alpha}{2},\,\,
\frac{3\alpha-2}{2\alpha^{2}-6\alpha+5},
\,\,\frac{2-2\alpha}{4\alpha-3}\Big\}.$
Assume that $(u_{0},\,\theta_{0})$ satisfies the assumptions stated in Theorem \ref{Th1}, then the combined equation (\ref{t305}) admits the following bound for any $0\leq t\leq T$
\begin{eqnarray}\label{t331}
\|G(t)\|_{L^{m}}^{m}+
\int_{0}^{T}{\|G(\tau)\|_{L^{\frac{2m}{2-\alpha}}}^{m}\,d\tau}\leq
C(T,\,u_{0},\,\theta_{0}),
\end{eqnarray}
where $m=\frac{2}{2\alpha-1}+\epsilon$ for some $\epsilon>0$ small enough, which may depend on $\alpha$ and $\beta$.
\end{lemma}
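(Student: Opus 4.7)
The plan is to test the combined equation \eqref{t305} for $G$ against $|G|^{m-2}G$, integrate in $x$, and close the resulting differential inequality by Gronwall using the bounds already established in Lemma \ref{AZL302}. The advection term vanishes by incompressibility. For the dissipation, I would invoke the pointwise C\'ordoba--C\'ordoba inequality together with the 2D Sobolev embedding $\dot{H}^{\alpha/2}(\mathbb{R}^2)\hookrightarrow L^{4/(2-\alpha)}(\mathbb{R}^2)$ applied to $f=|G|^{m/2}$, yielding
\begin{equation*}
\int_{\mathbb{R}^2}(\Lambda^\alpha G)\,|G|^{m-2}G\,dx \;\gtrsim\; \|\Lambda^{\alpha/2}(|G|^{m/2})\|_{L^2}^2 \;\gtrsim\; \|G\|_{L^{2m/(2-\alpha)}}^m,
\end{equation*}
which supplies both quantities appearing on the left-hand side of \eqref{t331}.

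For the linear forcing $\int\Lambda^{\beta-\alpha}\partial_x\theta\,|G|^{m-2}G\,dx$, I would shift $\delta+\beta/2$ derivatives onto $\theta$ by duality (with $\delta$ in the range provided by Lemma \ref{AZL302}) and then use Lemma \ref{Lem25} to control the resulting fractional Sobolev norm of $|G|^{m-2}G$ by $\|G\|_{\dot{B}^{s}_{q,p}}\|G\|_{L^{r(m-2)}}^{m-2}$. Interpolating the last factor between $L^m$ and $L^{2m/(2-\alpha)}$ and applying Young's inequality, one absorbs the top-order contributions into the left-hand side and leaves a remainder bounded by $\|\Lambda^{\delta+\beta/2}\theta\|_{L^2}^2\,\|G\|_{L^m}^m$ plus a lower-order $\theta$-only term, both time-integrable by Lemma \ref{AZL302}.

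The commutator term $\int[\mathcal{R}_\alpha,u\cdot\nabla]\theta\,|G|^{m-2}G\,dx$ is split via $u=u_G+u_\theta$ and treated using Lemma \ref{Lem23}. With $F=|G|^{m-2}G$, I would take $s_1=0$ in \eqref{t201} for the $u_G$-piece and some $s_1\in(\max\{0,(4-5\alpha)/2\},\,1-\alpha)$ in \eqref{t202} for the $u_\theta$-piece, then invoke Lemma \ref{Lem25} to convert each $\|F\|_{W^{s_2,p_2}}$ into the product of a Besov norm of $G$ and a power of an $L^{r(m-2)}$-norm. Interpolating this last norm between $L^m$ and $L^{2m/(2-\alpha)}$, Young's inequality absorbs the $L^{2m/(2-\alpha)}$-piece into the dissipation, while $\|\Lambda^{s_1}\theta\|_{L^{p_1}}$ is interpolated between $L^2$ (from \eqref{t301}) and $\dot{H}^{\delta+\beta/2}$ (from Lemma \ref{AZL302}) to produce time-integrable coefficients.

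The principal obstacle is the simultaneous feasibility of all exponent constraints: the pair $(s_1,s_2)$ in Lemma \ref{Lem23}, the H\"older triple $(p,q,r)$ in Lemma \ref{Lem25}, the Besov interpolation condition $s_1\le\delta+\beta/2$, and the Gagliardo--Nirenberg interpolations that must leave a strictly subcritical power of $\|G\|_{L^{2m/(2-\alpha)}}$ to be absorbed. The specific choice $m=2/(2\alpha-1)+\epsilon$ is dictated precisely by matching the scaling of $\|G\|_{L^{2m/(2-\alpha)}}^m$ against the gain available from the $u_\theta$-commutator, and the three-way minimum for $\beta$ together with $\alpha>\alpha_0$ will emerge exactly as the region where this constraint system is consistent with $\beta>1-\alpha$; carrying out this bookkeeping cleanly is the combinatorial heart of the proof.
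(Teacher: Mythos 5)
Your skeleton coincides with the paper's: test (\ref{t305}) against $|G|^{m-2}G$, use the C\'ordoba--C\'ordoba lower bound plus Sobolev embedding for the dissipation, split $u=u_G+u_\theta$, invoke Lemmas \ref{Lem23} and \ref{Lem25}, interpolate $G$ between $L^{m}$ and $L^{\frac{2m}{2-\alpha}}$, and close by Gronwall; the treatment of the $u_G$-piece with $s_1=0$ is also as in the paper. The gap is in how you pair $\theta$ with $|G|^{m-2}G$: you keep $\theta$ in $L^{2}$-based spaces ($\|\Lambda^{\delta+\beta/2}\theta\|_{L^{2}}$ for the forcing term, and $p_{1}=2$ via interpolation between $L^{2}$ and $\dot H^{\delta+\beta/2}$ for the $u_\theta$-commutator). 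That forces $p_{2}\geq 2$ in Lemma \ref{Lem23}/\ref{Lem25}, and the H\"older splitting $\frac{1}{p_{2}}=\frac{1}{q}+\frac{1}{r}$ with $q,r>2$ then creates an irreconcilable conflict. To interpolate $\|G\|_{L^{r(m-2)}}$ between $L^{m}$ and $L^{\frac{2m}{2-\alpha}}$ as you propose you need $r(m-2)\leq\frac{2m}{2-\alpha}$, i.e.\ $r\leq\frac{1}{(2-\alpha)(1-\alpha)}$ at $m=\frac{2}{2\alpha-1}$ (about $4.2$ for $\alpha\approx0.8$), hence $q\gtrsim 3.8$; but the embedding $H^{\alpha/2}=B^{\alpha/2}_{2,2}\hookrightarrow B^{s_{2}}_{q,\cdot}$ costs $1-\frac{2}{q}$ derivatives and leaves $s_{2}\leq\frac{\alpha}{2}-1+\frac{2}{q}<0$, incompatible with the requirement $s_{1}+s_{2}>2-2\alpha$ of (\ref{t202}) when $s_{1}<1-\alpha$. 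Conversely, for the forcing term the surviving order $s=1+\frac{\beta}{2}-\alpha-\delta\geq 1-\alpha$ on $|G|^{m-2}G$ forces $q\leq\frac{2}{2-3\alpha/2}$ and hence $r(m-2)$ far above $\frac{2m}{2-\alpha}$. Either way the bookkeeping you defer to does not close.

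The missing ingredient is the fractional Gagliardo--Nirenberg inequality $\|\Lambda^{\gamma\beta}\theta\|_{L^{1/\gamma}}\leq C\|\Lambda^{\beta/2}\theta\|_{L^{2}}^{2\gamma}\|\theta\|_{L^{\infty}}^{1-2\gamma}$, which upgrades the output of Lemma \ref{AZL302} to
\begin{equation*}
\int_{0}^{T}{\|\Lambda^{\gamma\beta}\theta(t)\|_{L^{1/\gamma}}^{q}\,dt}\leq C(T,u_{0},\theta_{0})\quad\text{for every }q<\infty\text{ and }0<\gamma<\tfrac12,
\end{equation*}
which is estimate (\ref{t314}) in the paper. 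Pairing $\theta$ in $L^{1/\gamma}$ with $1/\gamma>2$ puts $|G|^{m-2}G$ in $W^{s,\,1/(1-\gamma)}$ with $1/(1-\gamma)<2$; Lemma \ref{Lem25} is then applied with $q=2$ and $r=\frac{2}{1-2\gamma}$, so the Besov factor is $\|G\|_{B^{s}_{2,1/(1-\gamma)}}$, controlled by $\|G\|_{H^{\alpha/2}}$ whenever $s<\frac{\alpha}{2}$ with no derivative loss from changing the integrability index, while $r(m-2)=\frac{2(m-2)}{1-2\gamma}$ stays below $\frac{2m}{2-\alpha}$ for admissible $\gamma$. Moreover, since (\ref{t314}) holds for \emph{every} power $q$, the Gronwall coefficients remain integrable even when the $\theta$-factor appears to a power larger than $2$. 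The free parameter $\gamma$ is what reconciles all the exponent constraints, and the three-way minimum on $\beta$ together with $\alpha>\alpha_{0}$ is precisely the nonemptiness condition for the admissible $\gamma$-interval. Without this device your argument cannot be completed as described.
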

\begin{rem}\rm
It follows from the recent paper \cite{YXX} (also \cite{MX}) that we need the key requirement $m>\frac{2}{2\alpha-1}$, but the $m$ can be arbitrarily close to $\frac{2}{2\alpha-1}$. Thus it is sufficient to select $m=\frac{2}{2\alpha-1}+\epsilon$ with any $\epsilon>0$ small enough.
\end{rem}

\begin{proof}[\textbf{Proof of Lemma \ref{L302}}]
To begin with, let us recall the the following fractional version of the Gagliardo-Nirenberg inequality which is due to Hajaiej-Molinet-Ozawa-Wang \cite{HMOW}
\begin{eqnarray}
\|\Lambda^{\gamma\beta}\theta\|_{L^{\frac{1}{\gamma}}}
\leq C\|\Lambda^{\frac{\beta}{2}}\theta\|_{L^{2}}^{2\gamma}
\|\theta\|_{L^{\infty}}^{1-2\gamma},\quad 0<\gamma<\frac{1}{2}.\nonumber
\end{eqnarray}
In fact, the above inequality is a direct consequence of Theorem 1.2 of \cite{HMOW} as well as the equivalence $\dot{W}^{s,\,p}\approx \dot{B}_{p,p}^{s}$ for $0<s\neq \mathbb{N}$ and $1<p<\infty$.\\
Thanks to the bound (\ref{AZ001}), we have for any $0<\gamma<\frac{1}{2}$ and $2\leq q<\infty$
\begin{eqnarray}\label{t314}
\int_{0}^{T}{\|\Lambda^{\gamma\beta}\theta(t)\|_{L^{\frac{1}{\gamma}}}^{q}\,dt}
&\leq& C\|\theta_{0}\|_{L^{\infty}}^{(1-2\gamma)q}\int_{0}^{T}{\|\Lambda^{\frac{\beta}{2}}\theta(t)\|_{L^{2}}^{2\gamma q}\,dt}\nonumber\\
&\leq& C\|\theta_{0}\|_{L^{\infty}}^{(1-2\gamma)q}\int_{0}^{T}{
\|\Lambda^{\delta}\theta(t)\|_{L^{2}}^{\frac{4\delta\gamma
q}{\beta}}
\|\Lambda^{\delta+\frac{\beta}{2}}\theta(t)\|_{L^{2}}^{\frac{2(\beta-2\delta)\gamma q}{\beta}}\,dt}\nonumber\\
&\leq& C\|\theta_{0}\|_{L^{\infty}}^{(1-2\gamma)q}\sup_{0\leq t\leq
T}\|\Lambda^{\delta}\theta(t)\|_{L^{2}}^{\frac{4\delta\gamma
q}{\beta}}\int_{0}^{T}{
\|\Lambda^{\delta+\frac{\beta}{2}}\theta(t)\|_{L^{2}}^{\frac{2(\beta-2\delta)\gamma q}{\beta}}\,dt}\nonumber\\
&\leq& C(T,\,u_{0},\,\theta_{0}),
\end{eqnarray}
where in the last line we just take $\delta$ such that $\min\big\{\frac{\beta}{2}(1-\frac{1}{q\gamma}),\,\,0\big\}\leq \delta<\frac{\beta}{2}$.\\
Multiplying the equation (\ref{t305}) by
$|G|^{m-2}G$ ($m=\frac{2}{2\alpha-1}+\epsilon$ and $\epsilon>0$ to be fixed later),
we have after integration by part and using the divergence-free condition
\begin{eqnarray}\label{t315}&&\frac{1}{m}\frac{d}{dt}\|G(t)\|_{L^{m}}^{m}
+\int_{\mathbb{R}^{2}}
(\Lambda^{\alpha}G)|G|^{m-2}G\,dx\nonumber\\&=&\int_{\mathbb{R}^{2}}
{[\mathcal
{R}_{\alpha},\,u\cdot\nabla]\theta\,\,|G|^{m-2}G\,dx}+\int_{\mathbb{R}^{2}}
{\Lambda^{\beta-\alpha}\partial_{x}\theta\,\,|G|^{m-2}G\,dx}\nonumber\\&=&\int_{\mathbb{R}^{2}}
{[\mathcal
{R}_{\alpha},\,u_{G}\cdot\nabla]\theta\,\,|G|^{m-2}G\,dx}+\int_{\mathbb{R}^{2}}
{[\mathcal
{R}_{\alpha},\,u_{\theta}\cdot\nabla]\theta\,\,|G|^{m-2}G\,dx}\nonumber\\&&
+\int_{\mathbb{R}^{2}}
{\Lambda^{\beta-\alpha}\partial_{x}\theta\,\,|G|^{m-2}G\,dx}.
\end{eqnarray}
We infer from the maximum principle and Sobolev embedding that
\begin{eqnarray}\label{t316}\int_{\mathbb{R}^{2}}
(\Lambda^{\alpha}G)|G|^{m-2}G\,dx\geq \widetilde{C}\|\Lambda^{\frac{\alpha}{2}}G^{\frac{m}{2}}\|_{L^{2}}^{2}\geq \widetilde{C}\|G\|_{L^{\frac{2m}{2-\alpha}}}^{m},\end{eqnarray}
where $\widetilde{C}>0$ is an absolute constant.\\
Taking into account the inequality (\ref{t205}), we find that
\begin{eqnarray}\label{t317}\Big|\int_{\mathbb{R}^{2}}
{\Lambda^{\beta-\alpha}\partial_{x}\theta\,\,|G|^{m-2}G\,dx}\Big|&\leq&
C\|\Lambda^{\gamma\beta}\theta\|_{L^{\frac{1}{\gamma}}}
\|\Lambda^{1-\alpha+(1-\gamma)\beta}(|G|^{m-2}G)
\|_{L^{\frac{1}{1-\gamma}}}\nonumber\\
&\leq& C\|\Lambda^{\gamma\beta}\theta\|_{L^{\frac{1}{\gamma}}}
\|G\|_{B_{2,\,\frac{1}{1-\gamma}}^{1-\alpha+(1-\gamma)\beta}}\|G\|_{L^{\frac{2(m-2)}{1-2
\gamma}}}^{m-2}\nonumber\\
&\leq&C\|\Lambda^{\gamma\beta}\theta\|_{L^{\frac{1}{\gamma}}} \|G
\|_{H^{\frac{\alpha}{2}}}\|G\|_{L^{\frac{2(m-2)}{1-2
\gamma}}}^{m-2},
\end{eqnarray}
where we have used $H^{\frac{\alpha}{2}}\hookrightarrow B_{2,\,\frac{1}{1-\gamma}}^{1-\alpha+(1-\gamma)\beta}$ and $1-\alpha+(1-\gamma)\beta<\frac{\alpha}{2}$,
namely \begin{eqnarray}\label{Con1}\gamma>\frac{2\beta+2-3\alpha}{2\beta}.\end{eqnarray}
Now the estimate (\ref{t202}) with
$s_{1}=\gamma\beta$ implies that
\begin{eqnarray}\label{t318}&&\Big|\int_{\mathbb{R}^{2}}
{[\mathcal
{R}_{\alpha},\,u_{\theta}\cdot\nabla]\theta\,\,|G|^{m-2}G\,dx}\Big|\nonumber\\&\leq&
C\|\Lambda^{\gamma\beta}\theta\|_{L^{\frac{1}{\gamma}}}\|\theta\|_{L^{\infty}}
\||G|^{m-2}G\|_{W^{s_{2},\,\frac{1}{1-\gamma}}}\nonumber\\
&&\big(s_{2}>2-2\alpha-\gamma\beta,\,\,\,0\leq\gamma\beta<1-\alpha\big)\nonumber\\
&\leq&C\|\Lambda^{\gamma\beta}\theta\|_{L^{\frac{1}{\gamma}}}
\|\theta_{0}\|_{L^{\infty}} \|G\|_{B_{2,\,\frac{1}{1-\gamma}}^{s_{2}}}\|G\|_{L^{\frac{2(m-2)}{1-2
\gamma}}}^{m-2}\nonumber\\
&\leq&C\|\Lambda^{\gamma\beta}\theta\|_{L^{\frac{1}{\gamma}}}
\|G\|_{H^{\frac{\alpha}{2}}}\|G\|_{L^{\frac{2(m-2)}{1-2
\gamma}}}^{m-2},\quad \big(s_{2}<\frac{\alpha}{2}\big).
\end{eqnarray}
Now we verify that the number of above $s_{2}$ can be achieved. Indeed,
it sufficient to select $\gamma$ as follows
\begin{eqnarray}\label{Con2} 0\leq\gamma\beta<1-\alpha,\quad 2-2\alpha-\gamma\beta<\frac{\alpha}{2}.\end{eqnarray}
According to inequality (\ref{t201}) with $s_{1}=0$ as well as inequality (\ref{t205}), it gives
\begin{eqnarray}\label{t319}&&\Big|\int_{\mathbb{R}^{2}}
{[\mathcal
{R}_{\alpha},\,u_{G}\cdot\nabla]\theta\,\,|G|^{m-2}G\,dx}\Big|\nonumber\\&\leq&
C\|G\|_{L^{q}}\|\theta\|_{L^{\infty}}
\||G|^{m-2}G\|_{W^{s_{2}-\frac{\widetilde{\delta}}{2},\,p}}\quad \big(s_{2}-\frac{\widetilde{\delta}}{2}>1-\alpha,\,\,\frac{1}{p}+\frac{1}{q}=1\big)\nonumber\\
&&(  \mbox{ $\widetilde{\delta}>0$ is small enough})\nonumber\\
&\leq&C\|\theta_{0}\|_{L^{\infty}}
\|G\|_{L^{q}}\|G\|_{L^{(m-2)\times\frac{q}{m-2}}}^{m-2}
\|G\|_{B_{\frac{q}{q-(m-1)},\,p}^{s_{2}-\frac{\widetilde{\delta}}{2}}}
\quad(q>m-1)\nonumber\\
&\leq&C\|\theta_{0}\|_{L^{\infty}}
\|G\|_{L^{q}}^{m-1}\|G\|_{B_{\frac{q}{q-(m-1)},\,p}^{s_{2}-\frac{\widetilde{\delta}}{2}}}\nonumber\\
&\leq&C\|\theta_{0}\|_{L^{\infty}}
\|G\|_{L^{q}}^{m-1}\|G\|_{H^{s_{2}-1+\frac{2(m-1)}{q}}}\quad \big(q\leq 2(m-1)\big),
\end{eqnarray}
where we have applied $H^{s_{2}-1+\frac{2(m-1)}{q}}\hookrightarrow B_{\frac{q}{q-(m-1)},\,p}^{s_{2}-\frac{\widetilde{\delta}}{2}}$ for $m-1<q\leq 2(m-1)$.
Thanks to the requirement $s_{2}-\frac{\widetilde{\delta}}{2}>1-\alpha$ in (\ref{t319}), we can choose a sufficiently small $\widetilde{\delta}>0$ (in fact we can take $\widetilde{\delta}\leq\frac{4\alpha-3}{8}$ for example to satisfy all the conditions) such that
$$s_{2}=1-\alpha+\widetilde{\delta}.$$
Notice that the following interpolation inequality
$$\|G\|_{H^{-\alpha+\widetilde{\delta}+\frac{2(m-1)}{q}}}\leq C \|G\|_{L^{2}}^{1-\mu}
\|G\|_{H^{\frac{\alpha}{2}}}^{\mu},$$
where $$ \mu=\frac{-2\alpha+2\widetilde{\delta}+\frac{4(m-1)}{q}}{\alpha},\quad
 \frac{4(m-1)}{3\alpha-2\widetilde{\delta}}\leq q\leq \frac{2(m-1)}{\alpha-\widetilde{\delta}},$$
one can conclude that
\begin{eqnarray}\label{t320}\Big|\int_{\mathbb{R}^{2}}
{[\mathcal
{R}_{\alpha},\,u_{G}\cdot\nabla]\theta\,\,|G|^{m-2}G\,dx}\Big|
&\leq & C\|\theta_{0}\|_{L^{\infty}}\|G\|_{L^{q}}^{m-1}\|G\|_{L^{2}}^{1-\mu}
\|G\|_{H^{\frac{\alpha}{2}}}^{\mu}\nonumber
\\&\leq&C\|G\|_{L^{q}}^{m-1}
\|G\|_{H^{\frac{\alpha}{2}}}^{\mu}.
\end{eqnarray}
Substituting the estimates (\ref{t316})-(\ref{t318}) and
(\ref{t320}) into (\ref{t315}), one arrives at
\begin{align}\label{t321}\frac{d}{dt}\|G(t)\|_{L^{m}}^{m}+
\|G\|_{L^{\frac{2m}{2-\alpha}}}^{m}\leq C\|\Lambda^{\gamma\beta}\theta\|_{L^{\frac{1}{\gamma}}}
\|G\|_{H^{\frac{\alpha}{2}}}\|G\|_{L^{\frac{2(m-2)}{1-2
\gamma}}}^{m-2}+C\|G\|_{L^{q}}^{m-1}
\|G\|_{H^{\frac{\alpha}{2}}}^{\mu}.
\end{align}
By the Gagliardo-Nirenberg inequalities, we know
\begin{eqnarray}\label{t322}
\|G\|_{L^{\frac{2(m-2)}{1-2
\gamma}}}\leq C\|G\|_{L^{m}}^{1-\lambda_{1}}
\|G\|_{L^{\frac{2m}{2-\alpha}}}^{\lambda_{1}},\quad \lambda_{1}=\frac{(1+2\gamma)m-4}{\alpha(m-2)},
\end{eqnarray}
\begin{eqnarray}\label{t323}
\|G\|_{L^{q}}\leq C\|G\|_{L^{m}}^{1-\lambda_{2}}
\|G\|_{L^{\frac{2m}{2-\alpha}}}^{\lambda_{2}},\quad \lambda_{2}=\frac{2-\frac{2m}{q}}{\alpha}.
\end{eqnarray}
Here we want to emphasize that the following restrictions
\begin{eqnarray}\label{Con3}\frac{4-m}{2m}\leq\gamma\leq
\frac{m-(2-\alpha)(m-2)}{2m}, \quad m\leq q\leq
\frac{2m}{2-\alpha}\end{eqnarray}implies $0\leq \lambda_{1}\leq 1$
and $0\leq \lambda_{2}\leq 1$, respectively.\\
In view of above interpolation inequalities (\ref{t322}) and
(\ref{t323}), we can obtain
\begin{eqnarray}\label{t324}
&&C\|\Lambda^{\gamma\beta}\theta\|_{L^{\frac{1}{\gamma}}} \|G
\|_{H^{\frac{\alpha}{2}}}\|G\|_{L^{\frac{2(m-2)}{1-2
\gamma}}}^{m-2}\nonumber\\&\leq&
C\|\Lambda^{\gamma\beta}\theta\|_{L^{\frac{1}{\gamma}}} \|G
\|_{H^{\frac{\alpha}{2}}}\|G\|_{L^{m}}^{(m-2)(1-\lambda_{1})}
\|G\|_{L^{\frac{2m}{2-\alpha}}}^{(m-2)\lambda_{1}}\nonumber\\
&\leq& \frac{1}{4}\|G\|_{L^{\frac{2m}{2-\alpha}}}^{m}+
C(\|\Lambda^{\gamma\beta}\theta\|_{L^{\frac{1}{\gamma}}} \|G\|_{H^{\frac{\alpha}{2}}})^{\frac{m}{m-(m-2)\lambda_{1}}}
\|G\|_{L^{m}}^{\frac{m(m-2)(1-\lambda_{1})}{m-(m-2)\lambda_{1}}},
\end{eqnarray}
\begin{eqnarray}\label{t325}
&&C\|G\|_{L^{q}}^{m-1}
\|G\|_{H^{\frac{\alpha}{2}}}^{\mu}\nonumber
\\&\leq&
C\|G\|_{L^{m}}^{(m-1)(1-\lambda_{2})}
\|G\|_{L^{\frac{2m}{2-\alpha}}}^{(m-1)\lambda_{2}}
\|G\|_{H^{\frac{\alpha}{2}}}^{\mu}\nonumber\\
&\leq& \frac{1}{4}\|G\|_{L^{\frac{2m}{2-\alpha}}}^{m}+
C\|G\|_{H^{\frac{\alpha}{2}}}^{\frac{m\mu}{m-(m-1)\lambda_{2}}}
\|G\|_{L^{m}}^{\frac{m(m-1)(1-\lambda_{2})}{m-(m-1)\lambda_{2}}}.
\end{eqnarray}
Inserting the estimates (\ref{t324}) and (\ref{t325}) into (\ref{t321}), it holds that
\begin{eqnarray}\frac{d}{dt}\|G(t)\|_{L^{m}}^{m}+
\|G\|_{L^{\frac{2m}{2-\alpha}}}^{m}&\leq& C(\|\Lambda^{\gamma\beta}\theta\|_{L^{\frac{1}{\gamma}}} \|G\|_{H^{\frac{\alpha}{2}}})^{\frac{m}{m-(m-2)\lambda_{1}}}
\|G\|_{L^{m}}^{\frac{m(m-2)(1-\lambda_{1})}{m-(m-2)\lambda_{1}}}
\nonumber\\&&+C\|G\|_{H^{\frac{\alpha}{2}}}^{\frac{m\mu}{m-(m-1)\lambda_{2}}}
\|G\|_{L^{m}}^{\frac{m(m-1)(1-\lambda_{2})}{m-(m-1)\lambda_{2}}}.\nonumber
\end{eqnarray}
By direct calculation, we have the following facts
$$\frac{m(m-2)(1-\lambda_{1})}{m-(m-2)\lambda_{1}}\leq m,\quad\frac{m(m-1)(1-\lambda_{2})}{m-(m-1)\lambda_{2}}\leq m,$$
$$m\leq\frac{2}{2-2\alpha+\widetilde{\delta}}\Rightarrow \frac{m\mu}{m-(m-1)\lambda_{2}}\leq 2,$$
and
\begin{eqnarray}\label{Con4}\gamma< \frac{8-(2-\alpha)m}{4m}\,\,\,\Big(m<\frac{8}{2-\alpha}\Big)\Rightarrow \frac{m}{m-(m-2)\lambda_{1}}< 2.\end{eqnarray}
We thus get
\begin{eqnarray}\label{t326}\frac{d}{dt}\|G(t)\|_{L^{m}}^{m}\leq C\Big\{\big(\|\Lambda^{\gamma\beta}\theta\|_{L^{\frac{1}{\gamma}}} \|G\|_{H^{\frac{\alpha}{2}}}\big)^{\frac{m}{m-(m-2)\lambda_{1}}}
+\|G\|_{H^{\frac{\alpha}{2}}}^{\frac{m\mu}{m-(m-1)\lambda_{2}}}\Big\}(1+
\|G\|_{L^{m}}^{m}).
\end{eqnarray}
Thanks to the above facts (\ref{Con4}) as well as the bound (\ref{t314}), we can deduce that
$$(\|\Lambda^{\gamma\beta}\theta\|_{L^{\frac{1}{\gamma}}} \|G\|_{H^{\frac{\alpha}{2}}})^{\frac{m}{m-(m-2)\lambda_{1}}}\in L^{1}(0,\,T),\qquad\|G\|_{H^{\frac{\alpha}{2}}}^{\frac{m\mu}{m-(m-1)\lambda_{2}}}\in L^{1}(0,\,T).$$
By the Gronwall inequality, we can deduce from (\ref{t326}) that
\begin{eqnarray}\label{New01}\|G(t)\|_{L^{m}}^{m}+
\int_{0}^{T}{\|G(\tau)\|_{L^{\frac{2m}{2-\alpha}}}^{m}\,d\tau}\leq
C<\infty.\end{eqnarray}
Finally, let us check that all the restrictions would work.
Combining all the requirement on the number $q$, it should be
$$\max\Big\{m-1,\,\,\frac{4(m-1)}{3\alpha-2\widetilde{\delta}},\,\,m\Big\}<q< \min\Big\{2(m-1),\,\,\frac{2(m-1)}{\alpha-\widetilde{\delta}},\,\,\frac{2m}{2-\alpha}\Big\}.$$
Direct computations yields that the number $q$ can be fixed if we select $\widetilde{\delta}<\frac{3\alpha-2}{2}$.\\
Putting all the restrictions (\ref{Con1}), (\ref{Con2}), (\ref{Con3}), (\ref{Con4}) and $0<\gamma<\frac{1}{2}$ on $\gamma$, we have
\begin{eqnarray}\label{Con5}\mathcal{\underline{B}}(\alpha)<\gamma<
\mathcal{\overline{B}}(\alpha)
,\end{eqnarray}
where $$\mathcal{\underline{B}}(\alpha)=\max\Big\{0,\,\,\frac{2\beta+2-3\alpha}{2\beta},
\,\,\frac{4-5\alpha}{2\beta},\,\,\frac{4-m}{2m}\Big\},$$
$$\mathcal{\overline{B}}(\alpha)=\min\Big\{\frac{1}{2},\,\,\frac{1-\alpha}
{\beta},\,\,\frac{m-(2-\alpha)(m-2)}{2m},\,\,
\frac{8-(2-\alpha)m}{4m}\Big\},$$
and
$$2<m<\min\Big\{4,\,\,\frac{2}{2-2\alpha+\widetilde{\delta}},\,\,\frac{8}{2-\alpha}\Big\}=4.$$
According to $\beta>1-\alpha$ and $m<4$, the $\mathcal{\underline{B}}(\alpha)$ and $\mathcal{\overline{B}}(\alpha)$ can be reduced to
$$\mathcal{\underline{B}}(\alpha)=\max\Big\{0,\,\,\frac{2\beta+2-3\alpha}{2\beta},
\,\,\frac{4-m}{2m}\Big\},$$
$$\mathcal{\overline{B}}(\alpha)=\min\Big\{\frac{1}{2},\,\,\frac{1-\alpha}
{\beta},\,\,\frac{m-(2-\alpha)(m-2)}{2m}\Big\}.$$
Therefore, the $\gamma$ would work if the restriction on $\beta$ satisfies
\begin{eqnarray}\label{New05}1-\alpha<\beta<\min\Big\{\frac{\alpha}{2},\,\,\frac{(3\alpha-2)m}{m+(2-\alpha)(m-2)}
,\,\,\frac{2(1-\alpha)m}{4-m}\Big\}.
\end{eqnarray}
Notice that the above inequality (\ref{New05}) is strict inequality and the following key requirement
\begin{eqnarray}\label{A010}m>\frac{2}{2\alpha-1},\end{eqnarray}
we just verify that the above inequality (\ref{New05}) holds true when $m=\frac{2}{2\alpha-1}$. In this case, substituting the number $m=\frac{2}{2\alpha-1}$ into (\ref{New05}), the inequality (\ref{New05}) reduces to
\begin{eqnarray}\label{NR2}1-\alpha
<\beta<\min\Big\{\frac{\alpha}{2},\,\,
\frac{3\alpha-2}{2\alpha^{2}-6\alpha+5},
\,\,\frac{2-2\alpha}{4\alpha-3}\Big\}.
\end{eqnarray}
By tedious computations, it is not difficult to check that $\beta$ would work as long as $$1-\alpha<\frac{3\alpha-2}{2\alpha^{2}-6\alpha+5}\Rightarrow\alpha>\alpha_{0}.$$
If the above inequality (\ref{New05}) holds true when $m=\frac{2}{2\alpha-1}$, then one may take $m=\frac{2}{2\alpha-1}+\epsilon$ for some sufficiently small $\epsilon$ ($\epsilon>0$ may depend on $\alpha$ and $\beta$) such that both inequalities (\ref{New05}) and (\ref{A010}) fulfil. The reason is that both the inequalities (\ref{New05}) and (\ref{A010}) are strict.
\end{proof}

Now let us say some words to complete the proof of Theorem
\ref{Th1}.
\begin{proof}[\textbf{Proof of Theorem \ref{Th1}}]
In Lemma \ref{L302}, we have proved that
\begin{eqnarray}\label{242}\sup_{0\leq t\leq T}\|G(t)\|_{L^{\frac{2}{2\alpha-1}
+\epsilon}}<\infty,\end{eqnarray} which is a key estimate in order
to complete the proof of Theorem \ref{Th1} (see for example \cite{MX,YXX}). For the sake of convenience, we sketch it here. In fact, as detailed in Step 2 of
\cite{YXX}, the above estimate (\ref{242}) implies
$$\int_{0}^{T}{\|\omega(\tau)\|_{L^{\frac{2}{2\alpha-1}
+\epsilon}}\,d\tau}<\infty,$$ which further gives rise to
$$\int_{0}^{T}{\|G(\tau)\|_{B_{\infty,1}^{0}}\,d\tau}<\infty.$$
Finally, by Lemma 3.3 of \cite{YXX}, we obtain
$$\int_{0}^{T}{\|\omega(\tau)\|_{B_{\infty,1}^{0}}\,d\tau}<\infty.$$
It follows from the Littlewood-Paley technique that
\begin{eqnarray}
\int_{0}^{T}{\|\nabla u(\tau)\|_{L^{\infty}}\,d\tau}\leq C
\int_{0}^{T}{(\|u(\tau)\|_{L^{2}}+\|\omega(\tau)\|_{B_{\infty,1}^{0}})\,d\tau}<\infty.\nonumber
\end{eqnarray}
The above estimate is sufficient for us to get the desired results
of Theorem \ref{Th1}. The details can be found in \cite{MX,YXX}. Thus
we omit the details. Therefore, this concludes the proof of Theorem
\ref{Th1}.
\end{proof}

\vskip .4in

\end{document}